\newtheorem{theorem}{Theorem}[section]
\newtheorem{lemma}{Lemma}[section]
\newtheorem{proposition}{Proposition}[section]
\theoremstyle{definition}
\newtheorem*{definition}{Definition}
\newtheorem{remark}{Remark}
\definecolor{myred}{RGB}{226,56,18}
\definecolor{myorange}{RGB}{228,139,0}
\definecolor{mygreen}{RGB}{4,215,17}
\definecolor{mygrey}{RGB}{180,180,180}
\def\Cx{\mathbb{C}}
\def\Chat{\widehat{\mathbb{C}}}
\def\intr{\mathrm{int}}
\begin{document}

\title{\textsc{\textbf{Escaping points of commuting meromorphic functions with finitely many poles}}}

\author{Gustavo R.~Ferreira\thanks{Email: \texttt{gustavo.rodrigues-ferreira@open.ac.uk}}\\
  \small{School of Mathematics and Statistics, The Open University}\\
  \small{Milton Keynes, MK7 6AA, UK}
}

\maketitle

\begin{abstract}
Let $f$ and $g$ be commuting meromorphic functions with finitely many poles. By studying the behaviour of Fatou components under this commuting relation, we prove that $f$ and $g$ have the same Julia set whenever $f$ and $g$ have no simply connected fast escaping wandering domains. By combining this with a recent result of Tsantaris', we obtain the strongest statement (to date) regarding the Julia sets of commuting meromorphic functions. In order to highlight the difference to the entire case, we show that transcendental meromorphic functions with finitely many poles have orbits that alternate between approaching a pole and escaping to infinity at strikingly fast rates.
\end{abstract}


\section{Introduction}
For any meromorphic function $f:X\to\Chat$, where $X\in\{\Cx, \Chat\}$, we define its \textit{Fatou set} as
\[ F(f) = \left\{z\in X : \text{$\{f^n\}_{n\in\mathbb{N}}$ is a normal family and $f^n(z)$ is defined for all $n\in\mathbb{N}$}\right\}. \]
The complement of the Fatou set is known as the \textit{Julia set}, denoted $J(f)$, and it exhibits fascinating topological, geometrical and dynamical properties. Ever since Fatou and Julia first proved that any rational function of degree $\geq 2$ has a non-empty Julia set in the 1920s, much effort has been made to understand the behaviour of these sets.

Apart from being an interesting subject in its own right, this subject of holomorphic dynamics can be used to prove statements in general complex analysis. For example, in 1990, Eremenko gave a dynamical proof of a theorem by Ritt, classifying pairs of commuting rational functions \cite{Ere90,Rit23}. Unlike Ritt's elaborate proof, Eremenko's reasoning pivoted about a simple fact: if two rational functions commute, they have the same Julia set.

Although this fact was proved by both Fatou and Julia in 1923, the corresponding problem for transcendental functions -- which could serve as a first step in classifying commuting analytic functions in a more general setting -- is still open. After Baker proved this fact for commuting entire functions without escaping Fatou components in \cite{Bak84}, much progress has been made for entire functions by considering the \textit{fast escaping set} introduced by Bergweiler and Hinkkanen in \cite{BH99}. This is the set
\[ A(f) = \left\{z\in\Cx : \text{there exists $l\in\mathbb{N}$ such that $|f^{n+l}(z)|\geq M(R, f^n)$ for all $n\in\mathbb{N}$}\right\}, \]
where $M(r, f)$ denotes the maximum modulus $M(r, f) = \max\{|f(z)| : |z| = r\}$ and $R$ is chosen so that $M(r, f) > r$ for all $r\geq R$. Bergweiler and Hinkkanen showed that, for commuting entire functions $f$ and $g$, $g^{-1}\left(A(f)\right) \subset A(f)$ \cite[~Theorem 5]{BH99} and used this to show that commuting implies $J(f) = J(g)$ whenever $A(f)\subset J(f)$ and $A(g)\subset J(g)$. More recently, Benini, Rippon and Stallard \cite{BRS16} showed that, for entire functions $f$ and $g$, commuting implies having the same Julia set except possibly when $f$ and $g$ have simply connected fast escaping wandering domains. The overall strategy used in each of the papers \cite{Bak84}, \cite{BH99} and \cite{BRS16} is to show that if $f$ and $g$ commute and the components of $F(f)$ and $F(g)$ are only of certain types, then $g\left(F(f)\right)\subset F(f)$. It then follows that $F(f)\subset F(g)$, by Montel's theorem, and similarly $F(g)\subset F(f)$.

Here, we concern ourselves with transcendental meromorphic functions. Following Osborne and Sixsmith \cite{OS16}, we say that $f$ and $g$ commute -- or are permutable -- if, for every $z\in\Cx$, either $f\left(g(z)\right) = g\left(f(z)\right)$ or neither $f\left(g(z)\right)$ nor $g\left(f(z)\right)$ are defined. An immediate consequence of these requirements is that if $f$ and $g$ commute then their poles are the same. Indeed, let $w$ be a pole of $f$. Then, $g\left(f(w)\right)$ is undefined, which implies that $f\left(g(w)\right)$ is undefined as well. However, as a meromorphic function $f:\Cx\to\Chat$, the only value for which $f$ is undefined is $\infty$, so $g(w) = \infty$ and $w$ is a pole of $g$.

A recent paper by Tsantaris includes the result \cite[~Theorem 1.6]{Tsa19} that $J(f) = J(g)$ whenever $f$ and $g$ commute, for all non-entire meromorphic functions $f$ and $g$ apart from those in the class $\mathcal{P}$ of meromorphic functions with a single pole which is also an omitted value. Interestingly, his proof used a completely different strategy, bypassing the behaviour of Fatou components entirely.

In this paper, we adopt a strategy resembling the ones used for transcendental entire functions to tackle commuting meromorphic functions with finitely many poles, including those in class $\mathcal{P}$, thus bringing transcendental meromorphic functions ``up to speed'' on the problem of commuting functions. More specifically, we will look at the way in which non-escaping and escaping orbits are affected by the commuting relation and so obtain new information about how the components of $F(f)$ and $F(g)$ are affected by this relation. First, we generalise Baker's methods to prove the following result about non-escaping orbits.

\begin{theorem} \label{thm:fin}
Let $f$ and $g$ be commuting meromorphic functions. Assume that, on a component $U$ of $F(f)$, the sequence $(f^n)_{n\geq 1}$ admits a subsequence $(f^{n_k})_{k\geq 1}$ that converges locally uniformly to a finite limit function. Then, $g(U)\subset F(f)$.
\end{theorem}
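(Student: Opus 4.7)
The plan is to pick an arbitrary point $w_0 = g(z_0)$ with $z_0 \in U$ and prove that $w_0 \in F(f)$; since $z_0$ is arbitrary, this yields $g(U) \subset F(f)$. Note that, because $f$ and $g$ share their poles (as already noted in the introduction) and $U \subset F(f)$ avoids the poles of $f$, the map $g$ is holomorphic on $U$ and $w_0 \in \Cx$. The idea is to transfer normality of $\{f^n\}$ from $U$ to a neighborhood of $w_0$ via a local inverse of $g$, handling the critical points of $g$ separately at the end.

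The main step treats the case where $z_0$ is not a critical point of $g$. Then $g$ admits a holomorphic local inverse $h$ on a neighborhood $V$ of $w_0$ with $h(w_0) = z_0$ and $h(V) \subset U$, and commutativity yields the identity $f^n|_V = g \circ f^n \circ h$ for every $n \geq 1$. Pulling back the hypothesis via $h$, the subsequence $f^{n_k} \circ h$ converges locally uniformly on $V$ to the finite holomorphic function $\phi \circ h$; composing with $g$, continuous as a map $\Cx \to \Chat$, one obtains
\[ f^{n_k} \;=\; g \circ f^{n_k} \circ h \;\longrightarrow\; g \circ \phi \circ h \]
locally uniformly on $V$ in the spherical metric, the limit being meromorphic on $V$. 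Invoking the classical characterisation of $F(f)$ as the set of points admitting a neighborhood on which some subsequence of $\{f^n\}$ converges locally uniformly, one concludes $V \subset F(f)$ and thus $w_0 \in F(f)$.

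It remains to handle the critical case $g'(z_0) = 0$. Since the critical points of $g$ form a discrete subset of $U$, I choose a disk $D \subset U$ centred at $z_0$ containing no other critical point of $g$. By the open mapping theorem, $g(D)$ is a neighborhood of $w_0$, and every $w \in g(D) \setminus \{w_0\}$ has a non-critical preimage in $D$; the previous paragraph, applied at such a preimage, yields $g(D) \setminus \{w_0\} \subset F(f)$. If $w_0$ were to belong to $J(f)$, it would be an isolated point of $J(f) \cap g(D)$, contradicting the standard fact that the Julia set of a meromorphic function is perfect. Hence $w_0 \in F(f)$.

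The principal obstacle is this critical case: the direct lifting argument places only a \emph{punctured} neighborhood of $w_0$ in $F(f)$, and the remaining single point must be filled in through the external input that $J(f)$ has no isolated points.
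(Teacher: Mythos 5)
Your argument is correct, but it takes a genuinely different route from the paper's. The paper works entirely with forward images: it shows that $g$ restricted to a compact set is Lipschitz for the spherical metric (Lemma \ref{lem:Lip}), deduces that the sets $f^{n_k}\left(g(V')\right) = g\left(f^{n_k}(V')\right)$ all have spherical diameter less than $\epsilon$ for a suitable neighbourhood $V'$ of $z$, and then contradicts the blowing-up property (Proposition \ref{prop:blowup}) at $g(z)$. Since $g(V')$ is an open neighbourhood of $g(z)$ by the open mapping theorem, no local inverse is needed and the critical points of $g$ never enter the picture. Your route instead pushes the convergence through a local inverse $h$ of $g$, identifies the actual limit $g\circ\phi\circ h$ of $f^{n_k}$ on a neighbourhood of $g(z_0)$, and then must patch the critical values of $g$ separately using the perfectness of $J(f)$ --- a correct but extra step that the paper's argument avoids. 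What your approach buys is a concrete description of the limiting behaviour of $f^{n_k}$ near $g(z_0)$; what it costs is the case division and the reliance on a stronger-sounding normality criterion.

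That criterion is the one point you should justify more carefully. The ``classical characterisation'' you invoke --- that $w_0\in F(f)$ as soon as \emph{some} subsequence of $(f^n)$ converges locally uniformly on a neighbourhood of $w_0$, with all iterates defined there --- is true, but it is not the definition of $F(f)$: normality requires every sequence from the family to admit a convergent subsequence, which is a priori stronger. For transcendental meromorphic functions the equivalence is exactly as deep as the blowing-up property: if $w_0\in J(f)$, Proposition \ref{prop:blowup} forces $f^{n}(D)$ to contain a compact set of spherical diameter bounded below for all large $n$ and every disc $D\ni w_0$, whereas locally uniform convergence of $f^{n_k}$ to a continuous limit forces the spherical diameter of $f^{n_k}(D)$ to be small once $D$ is small. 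Either cite this characterisation precisely or include this short derivation; note that the paper proves Proposition \ref{prop:blowup} itself precisely because no reference for the meromorphic case was found, so the blow-up property is doing the same hidden work in your proof as it does explicitly in theirs.
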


Next, we turn to Bergweiler and Hinkkanen's approach. Although their definition of the fast escaping set does not make sense when $f$ has poles, we can use an alternative definition of $A(f)$ formulated by Rippon and Stallard \cite{RS05}; see Section \ref{sec:FatouJulia}. With this in mind, we prove the following.

\begin{theorem} \label{thm:Bf}
Let $f$ and $g$ be commuting meromorphic functions with finitely many poles. Let $z\in\Cx$ be a point that is escaping but not fast escaping under $f$. Then, $g(z)\notin A(f)$.
\end{theorem}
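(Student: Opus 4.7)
The plan is to argue by contradiction: suppose $z$ is escaping but not fast escaping under $f$, and in addition that $g(z)\in A(f)$; I will show this forces $z\in A(f)$, contradicting the hypothesis. The strategy mirrors Bergweiler and Hinkkanen's treatment of the entire case~\cite{BH99}, with modifications to accommodate the presence of poles.

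From $g(z)\in A(f)$ the definition of $A(f)$ provides some $l\ge 0$ with $|f^{n+l}(g(z))|\geq M(R,f^n)$ for all $n\ge 0$, where $R$ is chosen large enough that both $f$ and $g$ (which share the same finite pole set) are holomorphic on $\{|w|\geq R\}$ and that $M(r,f)>r$ there. Since $z$ is escaping under $f$, its orbit eventually lies in $\{|w|\ge R\}$ and therefore avoids every common pole, so the identity $f\circ g=g\circ f$ can be iterated along this orbit to give $f^{n+l}(g(z))=g(f^{n+l}(z))$ for every $n\ge 0$. For all $n$ large enough, $f^{n+l}(z)$ sits in the region where the standard bound $|g(w)|\le M(|w|,g)$ holds, and combining the two inequalities yields
\[
M\bigl(|f^{n+l}(z)|,g\bigr)\ \ge\ M(R,f^n) \quad\text{for all large }n.
\]

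The crux of the argument, and the principal obstacle, is to invert this into a lower bound on $|f^{n+l}(z)|$ of the form $M(R,f^{n-K})$ for some fixed $K$ independent of $n$, since such a bound is exactly what is needed to place $z$ in $A(f)$. The natural tool is a growth comparison of the shape
\[
M(r,g)\ \le\ M(r,f^{K}) \quad\text{for all sufficiently large }r,
\]
which encodes the general principle that iterates of the maximum modulus of a transcendental function eventually dominate the maximum modulus of any single transcendental function. Establishing this in the meromorphic setting is delicate because each iterate $f^{n}$ acquires infinitely many new poles (pre-images of earlier poles); however these poles accumulate only at the essential singularity at $\infty$, and only finitely many lie in any bounded annulus, so after perhaps enlarging $R$ the relevant estimates on circles of large radius still go through.

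Once the comparison is in place, combining it with the displayed inequality gives $M(|f^{n+l}(z)|,f^{K})\ge M(R,f^n)$, and using monotonicity of $r\mapsto M(r,f^{K})$ on radii beyond all poles together with the rough composition identity $M(R,f^{n})\approx M(M(R,f^{n-K}),f^{K})$ yields $|f^{n+l}(z)|\ge M(R,f^{n-K})$ for all sufficiently large $n$. Absorbing the finite initial segment into a slightly larger shift, we conclude $z\in A(f)$, contradicting the hypothesis that $z$ is escaping but not fast escaping.
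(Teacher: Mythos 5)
There is a genuine gap, and it sits exactly where you flag ``the principal obstacle.'' Your argument is built on the Bergweiler--Hinkkanen definition of $A(f)$ via the inequality $|f^{n+l}(g(z))|\geq M(R,f^n)$, but for a meromorphic $f$ with poles this is not the definition in force (the paper uses Rippon and Stallard's definition via outer sequences precisely because the maximum-modulus one ``does not make sense when $f$ has poles''). The obstruction is not just that each iterate $f^n$, $n\geq 2$, has infinitely many poles: those poles are preimages under $f$ of the poles of $f$, hence accumulate at the essential singularity at $\infty$, so \emph{no} enlargement of $R$ produces a ray of radii on which $M(r,f^n)$ is finite for all $n$. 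Consequently the growth comparison $M(r,g)\leq M(r,f^K)$ and the composition identity $M(R,f^n)\approx M\bigl(M(R,f^{n-K}),f^K\bigr)$ --- the two tools your inversion step relies on --- are unavailable. Worse, a large value of $|f^{n+l}(z)|$ caused by proximity to a pole of an iterate does not indicate fast escape at all (it is the ping-pong phenomenon studied later in the paper), so even a patched maximum-modulus criterion would not characterise $A(f)$ correctly.

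The paper replaces all of this with Lemma \ref{lem:outseqs}: it builds an outer sequence $(F_n)$ for $f$ with curves $\gamma_n$ and shows that the outer sets $E_n$ of the curves $\Gamma_n\subset g(\gamma_n)$ form a second outer sequence for $f$. The entire technical content --- tracking the preimages $f^{-1}(\Delta)$ and $g^{-1}(\Delta)$ of the disc $\Delta$ containing the poles, and showing they cannot contribute to the outer boundary of $f(g(\Omega_n))$ --- is the pole-control work your proposal defers to a ``delicate'' estimate that does not in fact go through. Once that lemma is in hand, the endgame is short and does match your skeleton: $g(z)\in A(f)$ gives $f^{n+l}(z)\in g^{-1}(E_n)$, each component of $g^{-1}(E_n)$ is either a neighbourhood of a pole of $g$ or lies in $F_n$, and the escaping hypothesis on $z$ (which you also invoke, correctly, to keep the orbit away from the poles of $g$) forces $f^{n+l}(z)\in F_n$, hence $z\in A(f)$, a contradiction. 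So your use of the escaping hypothesis is the right idea, but the lower-bound inversion needs to be rebuilt on outer sequences rather than on $M(r,f^n)$.
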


As noted above, Bergweiler and Hinkkanen showed that for commuting transcendental entire functions $f$ and $g$ we have $g\left(A(f)^c\right)\subset A(f)^c$, which is a stronger statement than Theorem \ref{thm:Bf}. Later in the introduction we discuss why our proof of Theorem \ref{thm:Bf} requires the extra hypothesis that the point $z$ is escaping.

In keeping with our aim of studying the behaviour of Fatou components under the commuting relation, we also prove a meromorphic version of Benini, Rippon and Stallard's result \cite[~Proposition 3.3]{BRS16}. Recall that a \textit{Baker wandering domain} is a multiply connected wandering domain $U\subset F(f)$ such that every $f^n(U)$ is bounded, $\mathrm{dist}\left(0, f^n(U)\right) \to +\infty$ as $n\to +\infty$ and $f^n(U)$ surrounds the origin for every sufficiently large $n$.

\begin{theorem} \label{thm:BRS}
Let $f$ and $g$ be commuting transcendental meromorphic functions with finitely many poles, and let $U\subset F(f)$ be a Baker wandering domain. Then, $g(U)$ is a Baker wandering domain of f.
\end{theorem}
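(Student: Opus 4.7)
The plan is to show that $g(U)$ is contained in a Fatou component $V$ of $f$ which is itself a Baker wandering domain, and then to argue $g(U)=V$.

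\textbf{Preliminary observation.} Since $U\subset F(f)$ is a Baker wandering domain, every iterate $f^n(U)$ is bounded and therefore avoids all poles of $f$. Because $f$ and $g$ share their poles (as noted in the introduction), $g$ is holomorphic on $U$ and on every $f^n(U)$; in particular, $g(U)$ is an open connected subset of $\Cx$.

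\textbf{Step 1: $g(U)\subset F(f)$.} I would choose a Jordan curve $\gamma\subset U$ that is not null-homotopic in $U$. For $n$ large, $f^n(\gamma)\subset f^n(U)$ is a continuous closed curve surrounding $0$ whose distance to the origin tends to infinity; in particular $f^n(\gamma)$ avoids the poles of $g$. The commuting identity $f^n(g(\gamma))=g(f^n(\gamma))$ then shows that the forward $f$-orbit of $g(\gamma)$ consists of bounded sets. I would promote this to normality of $\{f^n\}$ on a neighborhood of $g(\gamma)$ by exploiting the sub-annular geometry of $f^n(U)$ together with suitable minimum-modulus estimates for $g$ near its essential singularity at $\infty$. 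Combined with the connectedness of $g(U)$, this would place $g(U)$ inside a single Fatou component $V$ of $f$.

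\textbf{Step 2: $V$ is a Baker wandering domain, and $g(U)=V$.} From the inclusion $f^n(V)\supset g(f^n(U))$, together with the analysis of how $g$ transports the annular geometry of $f^n(U)$ performed in Step 1, I would deduce that $f^n(V)$ is bounded, surrounds $0$ for $n$ large, and satisfies $\mathrm{dist}(0,f^n(V))\to\infty$; hence $V$ is a Baker wandering domain. For the final equality $g(U)=V$, I would argue that $g|_U\colon U\to V$ is a proper holomorphic map: preimages of compact subsets of $V$ cannot cluster at $\partial U\subset J(f)$ without their $g$-images leaving the Fatou component $V$, so $g|_U$ is proper and hence surjective.

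\textbf{Main obstacle.} The crux is Step 1: transferring the multiply-connected, escape-to-infinity geometry of $U$ through $g$. Because $g$ oscillates wildly near its essential singularity at $\infty$, the $g$-images of general sets going to infinity behave unpredictably, and neither Theorem \ref{thm:fin} nor Theorem \ref{thm:Bf} directly apply to fast-escaping points of $U$. The argument must exploit the fact that the curves $f^n(\gamma)$ are not arbitrary but are produced by $f$-iteration inside a Baker wandering domain; structural results of Bergweiler and of Rippon--Stallard on the geometry of such domains, paired with minimum-modulus estimates for $g$ on the large round subannuli they contain, are the likely route to controlling $g$ along $f^n(\gamma)$ and thereby establishing normality of $\{f^n\}$ on a neighborhood of $g(\gamma)$.
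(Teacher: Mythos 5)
There is a genuine gap, and you have correctly identified where it is: your Step 1 is not a proof but a wish. You never establish that $\{f^n\}$ is normal on a neighbourhood of $g(U)$; the phrase ``suitable minimum-modulus estimates for $g$ near its essential singularity'' is precisely the part that does not exist in general, because $g$ oscillates arbitrarily near $\infty$ and there is no a priori control of $g$ on the escaping sets $f^n(U)$. Since everything in your Step 2 (boundedness of $f^n(V)$, the surrounding of the origin, and the properness of $g|_U$) is conditioned on ``the analysis performed in Step 1,'' the whole argument collapses at this point.

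The missing idea is that one does not need to prove $g(U)\subset F(f)$ by a normality argument at all. A function with a Baker wandering domain is not in class $\mathcal{P}$ (Baker), so Tsantaris' theorem already gives $J(f)=J(g)$; hence $U$ is itself a bounded Fatou component of $g$, and Herring's mapping theorem shows $V=g(U)$ is a full Fatou component of $g$, hence of $f$. This disposes of both your Step 1 and your properness argument in one stroke. For the remaining claim that $V$ is a Baker wandering domain, the paper argues by contradiction rather than trying to verify the definition directly: if $V$ were not Baker, then by the Rippon--Stallard characterisation $f^n(V)$ would be simply connected for all large $n$; but Zheng's theorem provides round annuli $A(r_n,s_n)\subset f^n(U)$ of unbounded modulus, and for a suitable $r\in(r_n,s_n)$ the curve $g(\{|z|=r\})\subset f^n(V)$ winds around the origin and reaches modulus $M(r,g)>r$, forcing the simply connected $f^n(V)$ to meet the multiply connected component $U_n$ --- a contradiction. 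Note that this only requires controlling $g$ on a single circle inside $f^n(U)$, not on all of $f^n(U)$, which is exactly why it succeeds where your direct approach stalls.
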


It was proved by Baker \cite{Bak87} that functions in class $\mathcal{P}$ do not have Baker wandering domains, so the equality of the Julia sets in the context of Theorem \ref{thm:BRS} is already established by Tsantaris' result. Rather, Theorem \ref{thm:BRS} is about the fact that Baker wandering domains of $f$ are -- in a sense -- ``preserved'' by $g$.

After these results on the behaviour of escaping points, we address the problem of equality of the Julia sets for general meromorphic functions. First, Theorems \ref{thm:fin} and \ref{thm:Bf} combine to give the following.

\begin{theorem} \label{thm:main}
Let $f$ and $g$ be transcendental meromorphic functions with finitely many poles, and suppose that $A(f)\subset J(f)$ and $A(g)\subset J(g)$. Then, if $f$ and $g$ commute, $J(f) = J(g)$.
\end{theorem}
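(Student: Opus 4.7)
The plan is to prove that $g(F(f))\subset F(f)$, from which the result follows by a standard Montel argument: for any component $U$ of $F(f)$, the iterates $g^n$ remain holomorphic on $U$ (poles of $g$ coincide with poles of $f$, which lie in $J(f)$) and satisfy $g^n(U)\subset F(f)\subset\Chat\setminus J(f)$, a set missing infinitely many points, so $\{g^n|_U\}$ is a normal family on $U$ and hence $U\subset F(g)$. This gives $F(f)\subset F(g)$; the symmetric argument, applied with the roles of $f$ and $g$ swapped and using $A(g)\subset J(g)$, yields the reverse inclusion. Thus $F(f)=F(g)$, equivalently $J(f)=J(g)$.

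To establish $g(U)\subset F(f)$ for each component $U$ of $F(f)$, I would split according to the behaviour of $(f^n)$ on $U$. If some subsequence $(f^{n_k})$ converges locally uniformly on $U$ to a finite limit function, then Theorem~\ref{thm:fin} immediately gives $g(U)\subset F(f)$. Otherwise $f^n\to\infty$ locally uniformly on $U$, so $U\subset I(f)$ (i.e., $U$ is a Baker domain or an escaping wandering domain of $f$). The hypothesis $A(f)\subset J(f)$ forces $U\cap A(f)=\emptyset$, so every $z\in U$ is escaping but not fast escaping, and Theorem~\ref{thm:Bf} then gives $g(z)\notin A(f)$ for every such $z$; in other words $g(U)\cap A(f)=\emptyset$.

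To upgrade this to $g(U)\subset F(f)$, I would invoke the identity $J(f)=\overline{A(f)}$. In our setting $\overline{A(f)}\subset J(f)$ is immediate from $A(f)\subset J(f)$ and the closedness of $J(f)$, while the reverse inclusion $J(f)\subset\overline{A(f)}$ is a standard consequence of the Rippon--Stallard theory of the fast escaping set for transcendental meromorphic functions with finitely many poles (together with the non-emptiness of $A(f)$ in this class). Suppose, for contradiction, that $g(z_0)\in J(f)$ for some $z_0\in U$. Since $g(U)\cap A(f)=\emptyset$, one has $g(z_0)\in\overline{A(f)}\setminus A(f)\subset\partial A(f)$. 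But $g$ is a non-constant meromorphic function, hence an open map, so $g(U)$ is an open neighbourhood of $g(z_0)$ in $\Chat$ and must intersect $A(f)$, contradicting $g(U)\cap A(f)=\emptyset$. Hence $g(U)\subset F(f)$, as required.

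The main obstacle is this escaping case: Theorem~\ref{thm:Bf} alone only rules out $g(z)\in A(f)$, which is a priori weaker than $g(z)\in F(f)$. Bridging the gap via the density of $A(f)$ in $J(f)$ together with the openness of the meromorphic map $g$ is the key topological manoeuvre of the proof; once this is in place, the rest of the argument follows routinely by combining Theorems~\ref{thm:fin} and~\ref{thm:Bf}.
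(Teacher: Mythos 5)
Your proposal is correct and follows essentially the same route as the paper: reduce to showing $g(F(f))\subset F(f)$, handle non-escaping components via Theorem~\ref{thm:fin}, and in the escaping case combine Theorem~\ref{thm:Bf} with the openness of $g$ and the fact that $J(f)\subset\overline{A(f)}$ (from $J(f)=\partial A(f)$, Lemma~\ref{lem:RS}(iv)) to reach a contradiction. The only cosmetic difference is that you first establish $g(U)\cap A(f)=\emptyset$ for the whole component before deriving the contradiction, whereas the paper argues pointwise on a neighbourhood; the content is identical.
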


We prove the above four theorems in Section \ref{sec:FatouJulia}. Next, combining Theorem \ref{thm:main} with Tsantaris' result yields our strongest statement on the equality of Julia sets.

\begin{theorem} \label{thm:Main}
Let $f$ and $g$ be commuting non-entire transcendental meromorphic functions. Then, $J(f) = J(g)$ except possibly when $f$ and $g$ are in class $\mathcal{P}$ and have simply connected fast escaping wandering domains.
\end{theorem}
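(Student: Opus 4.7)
\emph{Plan.} The strategy is to combine Tsantaris' Theorem~1.6 -- which gives $J(f)=J(g)$ for commuting non-entire meromorphic $f,g$ outside class $\mathcal{P}$ -- with Theorem~\ref{thm:main} above, applied in the class $\mathcal{P}$ case. Since every function in $\mathcal{P}$ has exactly one pole, Theorem~\ref{thm:main} becomes available as soon as $A(f)\subset J(f)$ and $A(g)\subset J(g)$, so the whole argument reduces to securing these two inclusions in the remaining case.

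First I would prove a short preparatory observation: class $\mathcal{P}$ is closed under commutation. If $f\in\mathcal{P}$ with omitted pole $p$ commutes with $g$, then commuting meromorphic functions share their pole sets (as noted in the introduction), so $p$ is the unique pole of $g$. If we had $g(z_0)=p$ for some $z_0\in\Cx$, then $f\circ g$ would be undefined at $z_0$, forcing $g\circ f$ also to be undefined there, and hence $f(z_0)=p$ -- contradicting the fact that $f$ omits $p$. Thus $g$ omits $p$ and $g\in\mathcal{P}$ as well.

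With this in hand the proof becomes a clean case split. If neither $f$ nor $g$ belongs to $\mathcal{P}$, Tsantaris' theorem gives $J(f)=J(g)$ directly. Otherwise, by the preparatory observation, both $f$ and $g$ lie in $\mathcal{P}$, and in particular both have a single (finite) pole; to invoke Theorem~\ref{thm:main} it then suffices to establish $A(f)\subset J(f)$ and $A(g)\subset J(g)$ under the standing hypothesis that neither function admits a simply connected fast escaping wandering domain. A Fatou component $U$ meeting $A(f)$ must be a wandering domain, since no periodic Fatou component (attracting or parabolic basin, rotation domain, or Baker domain) can contain fast escaping points. Known structural results for meromorphic functions with finitely many poles further force such a wandering domain to be either simply connected or a multiply connected Baker wandering domain, and Baker wandering domains are ruled out in class $\mathcal{P}$ by \cite{Bak87}. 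Both possibilities are thus excluded, so $A(f)\subset J(f)$, and symmetrically $A(g)\subset J(g)$; Theorem~\ref{thm:main} then yields $J(f)=J(g)$.

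The principal obstacle is the final structural step: one needs a complete classification of the Fatou components of class-$\mathcal{P}$ functions that can host fast escaping points, in particular the statement that every multiply connected wandering domain of a meromorphic function with finitely many poles is Baker wandering. Once this is granted, the exhaustion of cases (\emph{Baker wandering} versus \emph{simply connected fast escaping}) is airtight, and the rest of the proof is a direct assembly of the results of Baker, Tsantaris, and Theorem~\ref{thm:main}.
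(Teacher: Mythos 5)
Your proposal follows essentially the same route as the paper: Tsantaris' theorem outside class $\mathcal{P}$, and Theorem~\ref{thm:main} inside it after checking $A(f)\subset J(f)$ and $A(g)\subset J(g)$ via ``fast escaping Fatou component $\Rightarrow$ wandering domain'' plus Baker's work on class $\mathcal{P}$. Two remarks. First, your preparatory observation that class $\mathcal{P}$ is closed under commutation is a worthwhile addition (the paper glosses over the mixed case), but the reasoning as written is off: if $g(z_0)=p$ then $f\left(g(z_0)\right)=f(p)=\infty$ is \emph{defined}, not undefined; the commuting relation then forces $g\left(f(z_0)\right)=\infty$, hence $f(z_0)=p$ since $p$ is the unique pole of $g$, contradicting that $f$ omits $p$. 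Second, the ``principal obstacle'' you identify is illusory, and the intermediate statement you say you need -- that every multiply connected wandering domain of a meromorphic function with finitely many poles is a Baker wandering domain -- is false in that generality (Lemma~\ref{lem:RSBaker} gives only a criterion: $U$ is Baker wandering iff infinitely many forward images are multiply connected). You do not need it: the paper invokes the stronger form of Baker's theorem \cite{Bak87}, namely that functions in class $\mathcal{P}$ have \emph{no multiply connected wandering domains at all}, so any fast escaping Fatou component of a class-$\mathcal{P}$ function is automatically a simply connected fast escaping wandering domain, which the hypothesis excludes. With those two adjustments your argument is exactly the paper's.
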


To see how Theorem \ref{thm:Main} follows from Tsantaris' result and Theorem \ref{thm:main}, we observe that if $f$ and $g$ are in class $\mathcal{P}$ and have no simply connected fast escaping wandering domains, then they have no fast escaping Fatou components. Indeed, any fast escaping Fatou component must be a wandering domain \cite[~Theorem 2]{RS05} and, as shown by Baker \cite{Bak87}, functions in class $\mathcal{P}$ have no multiply connected wandering domains.

It is an interesting open question whether functions in class $\mathcal{P}$ can have simply connected fast escaping wandering domains.

Now, we return to the point made earlier that Theorem \ref{thm:Bf} is not as strong as the corresponding result Bergweiler and Hinkkanen have for entire functions, since Theorem \ref{thm:Bf} requires $z$ to be escaping. Nor is Theorem \ref{thm:fin} as strong as it could be for entire functions. Indeed, although Baker does not remark on this, his method implies that if $f$ and $g$ are commuting transcendental entire functions then $g$ maps non-escaping $f$-orbits to non-escaping $f$-orbits. This property, however, does not follow immediately in the meromorphic case. The reason for this is that $f$ can have non-escaping orbits with a subsequence converging to a pole. Since $f$ and $g$ share poles upon commuting, this means that $g$ could potentially map this non-escaping orbit to an escaping one. In order to give a precise description of this phenomenon, we introduce the following new concept.

\begin{definition}
Let $f$ be a transcendental meromorphic function with finitely many poles. A point $z\in\Cx$ is said to have a \textit{ping-pong orbit} if there exist a pole $p\in\Cx$ of $f$, subsequences $(f^{m_k})_{k\geq 1}$ and $(f^{n_k})_{k\geq 1}$ and $M \geq 1$ such that
\begin{enumerate}[(i)]
    \item $f^{m_k}(z)\to p$ and $f^{n_k}(z)\to \infty$ as $k\to +\infty$;
    \item for all $k\in\mathbb{N}$, $|m_k - n_k| \leq M$ and $|n_k - m_{k+1}| \leq M$.
\end{enumerate}
We denote the set of points with ping-pong orbits by $BU_P(f)$, and study it in Section \ref{sec:PingPong}.
\end{definition}

We point out that $BU_P(f)$ is a subset of the so-called \textit{bungee set} $BU(f)$; defined by Osborne and Sixsmith \cite{OS16b}, this is the set of points whose orbit is neither escaping nor bounded. Clearly, the orbit of any point in $BU(f)$ has a subsequence escaping to infinity, and another with a finite limit. In the case of a ping-pong orbit, we ask that this limit be a pole; indeed, condition (ii) of the definition implies that a pole must be involved somehow.

Up until this point, our discussion about $BU_P(f)$ has been purely hypothetical; there is, \textit{a priori}, no reason to assume that such points actually exist. In Subsection~\ref{ssec:ex}, we construct meromorphic functions with a single pole and ping-pong wandering domains. This shows that we are justified in focusing on escaping points in Theorem \ref{thm:Bf}.

Finally, we show that the presence of ping-pong orbits is not an oddity of the examples constructed here. The bungee set, for instance, is ubiquitous: Osborne and Sixsmith showed in \cite{OS16b} that $BU(f)$ is always dense in the Julia set of transcendental entire functions. This result was extended to quasiregular functions by Nicks and Sixsmith \cite{NS18}, and then to quasimeromorphic ones by Warren \cite{War20}. Our final theorem, to be proved in Section 3, shows the same for $BU_P(f)$, and adds something special about its rate of escape.

\begin{theorem} \label{thm:pingpong}
Let $f$ be a non-entire transcendental meromorphic function with a finite number of poles. Then, there exists a dense set of points in $J(f)$ with ping-pong orbits. Furthermore, if $f$ is not in class $\mathcal{P}$, the escaping subsequence of their orbits can escape arbitrarily fast.
\end{theorem}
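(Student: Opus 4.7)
The plan is to construct, inside any open set $U$ meeting $J(f)$, a point $z$ whose forward orbit approaches a chosen pole $p$ along a subsequence $(m_k)$ with uniformly bounded gaps; since $f(p)=\infty$, setting $n_k=m_k+1$ then automatically produces the required escaping subsequence, with $|n_k-m_k|=1$ and $|n_k-m_{k+1}|\leq M-1$. Fix a pole $p$ of $f$ and a decreasing sequence of discs $V_k=B(p,\epsilon_k)$ with $\epsilon_k\downarrow 0$, choosing $\epsilon_1$ small enough that $f(V_1\setminus\{p\})$ contains a full neighbourhood of $\infty$ of the form $\{|w|>R\}$, which is possible because $p$ is a pole.

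The heart of the argument is a \emph{uniform return lemma}: there is $M\geq 1$ (depending on $V_1$ but not on $k$) such that for any open $W\subset V_1$ meeting $J(f)$ and any $k$, one can find an open $W'\subset W$ and $1\leq j\leq M$ with $f^j(W')\subset V_{k+1}$. The underlying mechanism is ``pole-push then Picard'': applying $f$ once to (part of) $W$ lands us in a neighbourhood of $\infty$, where, by the great Picard theorem at this essential singularity, $f$ takes every value except at most two---in particular values inside $V_{k+1}$, giving a second iterate back close to $p$. Granting this lemma, we iterate: start from $U$, use the blowing-up property to find an initial $U_1\subset U$ and $m_1$ with $f^{m_1}(U_1)\subset V_1$; then apply the return lemma inductively to the images $f^{m_k}(U_k)$, pulling back each $W'$ through $f^{m_k}$ to produce $U_{k+1}\subset U_k$ and $m_{k+1}=m_k+j_k\leq m_k+M$. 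Shrinking diameters, $\bigcap_k\overline{U_k}$ is nonempty and any point in it is a ping-pong orbit. The hardest step is precisely the uniformity of $M$: the pole--Picard mechanism gives the transitions qualitatively, but ensuring that preimages of arbitrarily small $V_{k+1}$ can be realised inside any prescribed open $W\subset V_1$ in bounded time is the technical crux, which I expect to resolve by combining great Picard with density of repelling prepoles of $f$ accumulating at $p$.

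For the second assertion, suppose $f\notin\mathcal{P}$: one can then pick the pole $p$ to be a non-omitted value of $f$, so $f^{-1}(p)\neq\emptyset$ and, by invariance of $J(f)$ together with $p\in J(f)$, the preimages $f^{-1}(p)\cap J(f)$ accumulate at $p$. This gives extra freedom in the return lemma: one may take $W'\subset B(p,\delta_k)\cap V_k$ for any prescribed $\delta_k\downarrow 0$, by selecting $W'$ near a preimage of $p$ that lies inside $V_k$. Since $|f(z)|\asymp|z-p|^{-d}$ near $p$ (where $d$ is the order of the pole), making $\delta_k$ decay rapidly enough forces $|f^{n_k}(z)|$ to grow faster than any prescribed iterated quantity $M^n(R,f)$; hence the escaping subsequence can be made to escape arbitrarily fast in the sense of Rippon--Stallard's $A(f)$. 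In class $\mathcal{P}$ the pole is omitted, so preimages of $V_{k+1}$ lie only in asymptotic tracts at $\infty$ and cannot be placed arbitrarily close to $p$, which obstructs the acceleration.
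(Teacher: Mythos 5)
Your plan takes a genuinely different route from the paper, which splits the proof into two cases: for $f\in\mathcal{P}$ it invokes Mart\'i-Pete's theory of fast escaping sets for transcendental self-maps of $\Cx^*$ with an alternating essential itinerary, and for $f\notin\mathcal{P}$ it combines the Ahlfors Five Islands Theorem with Rippon and Stallard's lemma on nested compacta satisfying $f(E_n)\supset E_{n+1}$. Your proposal instead tries a unified ``pole-push then Picard'' iteration. The comparison is instructive, but as written the proposal has a genuine gap at exactly the step you flag as the technical crux.

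The uniform return lemma is not true in the generality you state, and Picard's theorem alone cannot deliver it. The problem is this: at stage $k$ the set $W=f^{m_k}(U_k)$ is only known to be some open subset of $V_k$ meeting $J(f)$; nothing in the iteration forces $W$ to contain $p$ or any prepole of bounded order. Without that, $f(W)$ need not reach a neighbourhood of $\infty$ in one step, and indeed the blowing-up property (Proposition~\ref{prop:blowup}) supplies a time $N=N(W)$ that is in general unbounded over small $W$. Picard's theorem near $\infty$ gives surjectivity onto $\Chat$ minus two points, but it gives no control on \emph{where} the preimages of $V_{k+1}$ lie, and in particular does not produce a preimage sitting inside a prescribed $W$ at bounded depth. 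What the paper uses to get the needed uniformity is precisely what your sketch omits: Lemma~\ref{lem:AFIT} (Ahlfors Five Islands) guarantees a simply connected island $G_n$ arbitrarily far out that is mapped \emph{conformally} onto one of the three prechosen targets ($D_n$, $V_n^1$, $V_n^2$). Conformality of the pullback is what bounds the transit time from a neighbourhood of $\infty$ back to a neighbourhood of $p$ by at most two iterations, and it is also what requires $p$ to be non-omitted (to manufacture the auxiliary targets $V_n^1$, $V_n^2$). In class $\mathcal{P}$ this manufacture fails, which is why the paper switches to the essential-itinerary machinery for $f^2\colon\Cx^*\to\Cx^*$ rather than trying to run Picard directly.

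Two smaller issues. First, the statement that ``the preimages $f^{-1}(p)\cap J(f)$ accumulate at $p$'' is not correct: $f^{-1}(p)$ is a discrete set and need not cluster at $p$. What is true (for $p\notin E(f)$) is that $\bigcup_{n\geq 1}f^{-n}(p)$ is dense in $J(f)$, hence accumulates at $p$; but those prepoles have unbounded order, which is exactly why a uniform $M$ does not fall out. Second, your fast-escape argument is heuristic: choosing $\delta_k\downarrow 0$ rapidly does push $|f(z)|$ up via $|f(z)|\asymp|z-p|^{-d}$, but you still need to land inside prescribed sets at bounded depth after the push, and that again needs conformal control rather than mere surjectivity. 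In the paper this is handled for free: the radii $R_n$ in the Five Islands construction are an arbitrary monotone sequence tending to $\infty$, and the orbit is forced through $G_{n+1}\subset\{|z|>R_n\}$, so arbitrary speed comes at no extra cost once the construction is in place.
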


\textsc{Acknowledgements.} I am deeply grateful to my supervisors, Phil Rippon and Gwyneth Stallard, for their contributions to this work.

\section{Fatou and Julia sets of commuting meromorphic functions} \label{sec:FatouJulia}
In this section, we prove Theorems \ref{thm:fin} to \ref{thm:main}.

\subsection{The non-escaping case}
In this subsection, we prove Theorem \ref{thm:fin}, in which we consider a non-escaping Fatou component of $f$. To do this, we use the following version of the blowing-up property of the Julia set, which is a standard result for entire and rational functions -- see, for instance, \cite[~Proposition 2.4.5]{MNTU}. Although it seems to be well-known for transcendental meromorphic functions as well, we were not able to find a proof for this version in the literature and include one here for completeness. First, we recall that the exceptional set $E(f) = \{z\in\Chat : \text{$\mathcal{O}^-(z)$ is finite}\}$ has at most two points, by Picard's theorem.

\begin{proposition} \label{prop:blowup}
Let $f$ be a transcendental meromorphic function and let $K\subset \Cx\setminus E(f)$ be a compact set. For any $z\in J(f)$ and any neighbourhood $V$ of $z$, there exists $N\in\mathbb{N}$ such that, for all $n\geq N$,
\[ f^n(V)\supset K. \]
\end{proposition}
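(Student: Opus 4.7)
The plan is to combine two ingredients: Montel's theorem, to show that the iterates of $V$ collectively cover $\Chat\setminus E(f)$, and the density of repelling periodic points in $J(f)$, to upgrade ``eventually hit'' to ``hit by \emph{all} sufficiently large iterates''.

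First, I would show that $\bigcup_{n\geq 1}f^n(V)\supset\Chat\setminus E(f)$. Since $z\in J(f)\cap V$, the family $\{f^n|_V\}$ is not normal, so by Montel's theorem for meromorphic families the set $\tilde E_V:=\Chat\setminus\bigcup_{n\geq 1}f^n(V)$ has at most two points. A direct check gives that $\tilde E_V$ is backward invariant under $f$: if $w\in\tilde E_V$ and $f(\zeta)=w$, then any $n\geq 1$ with $\zeta\in f^n(V)$ would force $w\in f^{n+1}(V)$, a contradiction. Backward invariance together with finiteness of $\tilde E_V$ gives $\tilde E_V\subset E(f)$, as required.

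Second, to pass from ``infinitely often'' to ``eventually'', I would use the density of repelling periodic points in $J(f)$ (classical for transcendental meromorphic functions) to pick a repelling periodic point $p\in V$ of period $q$ and, via local linearization at $p$, a neighbourhood $W\subset V$ of $p$ satisfying $f^q(W)\supset W$. For each residue $r\in\{0,\ldots,q-1\}$, the set $W_r:=f^r(W)$ is an open neighbourhood of the Julia point $f^r(p)$ with $f^q(W_r)=f^r(f^q(W))\supset W_r$, so $(f^{mq}(W_r))_m$ is an increasing sequence of open sets. Applying the first step to $W_r$ with $f^q$ in place of $f$ (and using $E(f^q)=E(f)$), we obtain $\bigcup_m f^{mq}(W_r)\supset\Chat\setminus E(f)\supset K$, and by compactness of $K$ together with monotonicity there exists $M_r$ with $K\subset f^{mq+r}(W)\subset f^{mq+r}(V)$ for every $m\geq M_r$. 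Taking $N:=q\max_r M_r+q$ and writing a general $n\geq N$ as $n=mq+r$ with $0\leq r<q$ then gives $K\subset f^n(V)$ for all $n\geq N$.

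The main obstacle is precisely this passage from the first step to the second: Montel's theorem alone only produces infinitely many iterates meeting each point of $K$, and extracting a uniform threshold $N$ valid for \emph{every} $n\geq N$ requires some self-covering structure inside $V$. The density of repelling periodic points -- itself a nontrivial result in the transcendental meromorphic setting -- supplies exactly such a structure through the neighbourhood $W$ of $p$, after which compactness of $K$ and a residue-by-residue bookkeeping argument close the proof.
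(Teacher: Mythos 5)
Your argument is correct, but it follows a genuinely different route from the paper's. You run the classical rational-map proof: Montel's theorem shows that $\bigcup_{n}f^n(V)$ omits at most two points of $\Chat$, backward invariance pushes the omitted set into $E(f)$, and the density of repelling periodic points supplies a self-covering neighbourhood $W\subset V$ with $f^q(W)\supset W$, after which monotonicity and compactness of $K$ give the uniform threshold $N$. The paper instead splits on whether $\mathcal{O}^-(\infty)$ is finite or infinite: in the infinite case $J(f)=\overline{\mathcal{O}^-(\infty)}$, so $V$ contains a prepole, some iterate of $V$ is a neighbourhood of $\infty$, and Picard's great theorem gives the conclusion for all large $n$ at once with no self-covering structure needed; in the finite case $f\in\mathcal{P}$ and the argument either again passes through a neighbourhood of $\infty$ or reduces to the known entire-function case. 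What your approach buys is uniformity (no case analysis) and independence from the structure of $\mathcal{O}^-(\infty)$; what it costs is reliance on the density of repelling periodic points in $J(f)$, which for general transcendental meromorphic functions is itself a deep theorem (Baker--Kotus--L\"u via the Ahlfors five islands theorem, together with the entire and class $\mathcal{P}$ cases) --- you should cite it explicitly, or replace it by the more elementary device of three disjoint neighbourhoods of Julia points inside $V$ plus a pigeonhole argument, which extracts the same self-covering from Montel alone. Two small points to tidy: the family $\{f^n|_V\}$ may fail to be defined at prepoles in $V$, but in that case some $f^k(V)$ is already a neighbourhood of $\infty$ and Picard closes your first step directly; and the identity $E(f^q)=E(f)$, while true, deserves a line (if $\mathcal{O}^-_{f^q}(w)$ is finite then $\mathcal{O}^-_{f}(w)\subset\bigcup_{j=0}^{q-1}f^j\left(\mathcal{O}^-_{f^q}(w)\right)$ is finite, and the reverse inclusion is immediate).
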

\begin{proof}
We divide the proof in two cases based on whether
\[ \mathcal{O}^-(\infty) = \{w\in\Cx : \text{there exists $n\in\mathbb{N}$ such that $f^n(w) = \infty$}\} \]
is finite or infinite. Without loss of generality, we assume that $f$ is non-entire.

In the finite case, it follows from Picard's theorem that $f$ is in class $\mathcal{P}$. Without loss of generality, we assume that its pole is at the origin. Now, if $z = 0$, any neighbourhood $V$ of $z$ is mapped by $f$ onto a neighbourhood of $\infty$. By Picard's theorem, $f^n(V)$, $n\geq 2$, contains all of $\Chat$ apart from the exceptional points of $f$, and so in particular $f^n(V)\supset K$ for any compact set $K\subset \Cx\setminus E(f)$. If $z\neq 0$, but $V$ contains $0$, the same argument applies; if, on the other hand, $V$ does not contain the origin, then $f^n(V)$ does not contain a pole of $f$ for any $n\in\mathbb{N}$. This means that the usual argument for the blowing-up property for entire functions applies; we refer to \cite[~Proposition 2.4.5]{MNTU}.

However, if $\mathcal{O}^-(\infty)$ is infinite, then it follows from Montel's theorem that $J(f) = \overline{\mathcal{O}^-(\infty)}$; see, for instance, \cite[~p. 4]{Ber93}. In particular, any neighbourhood $V$ of a point $z\in J(f)$ contains a point $w$ such that, for some $N_0\in\mathbb{N}$, $f^{N_0}(w) = \infty$. This means that $V$ contains a small disc $D\ni w$ such that $f^{N_0}(D)$ is a neighbourhood of infinity and, again by Picard's theorem, $f^n(D)$ omits at most two points on the Riemann sphere for $n\geq N_0 + 1$; namely, the exceptional values of $f$. In particular, $f^n(D)$ contains any compact set $K\subset \Cx\setminus E(f)$ for $n\geq N_0 + 1$.
\end{proof}

We will also need the following property of meromorphic functions.
\begin{lemma} \label{lem:Lip}
Let $g:\Cx\to\Chat$ be a transcendental meromorphic function and $K$ a compact subset of $\Cx$. Then, $g|_K$ is Lipschitz continuous with respect to the Euclidean and spherical metrics (on $\Cx$ and $\Chat$, respectively).
\end{lemma}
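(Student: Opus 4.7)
The plan is to control $g$ via its spherical derivative $g^\#(z) := |g'(z)|/(1+|g(z)|^2)$, which is the natural derivative when $g$ is viewed as a map from $(\Cx, |\cdot|)$ to $(\Chat, d_s)$. The crucial fact is that $g^\#$ extends continuously across the poles of $g$: if $p$ is a pole, then $h := 1/g$ is holomorphic in a neighbourhood of $p$ with $h(p) = 0$, and a direct computation gives $g^\#(z) = h^\#(z)$ on a punctured neighbourhood of $p$, so $g^\#$ admits a continuous (in fact finite) value at $p$. Consequently $g^\#$ is a continuous function on all of $\Cx$.

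First I would enlarge $K$ to the compact set $K' := \{z \in \Cx : \mathrm{dist}(z, K) \leq 1\}$, on which by continuity $g^\#$ attains a finite maximum $L$. For $z, w \in K$ with $|z - w| \leq 1$, the Euclidean segment $\gamma(t) = (1-t)z + tw$ lies in $K'$, so the spherical length of $g \circ \gamma$, namely $\int_0^1 g^\#(\gamma(t))|\gamma'(t)|\, dt$, is bounded by $L|z-w|$. Since $d_s(g(z), g(w))$ is at most the spherical length of any joining path, this yields $d_s(g(z), g(w)) \leq L|z - w|$. For $z, w \in K$ with $|z - w| > 1$, I would invoke the fact that $(\Chat, d_s)$ has finite diameter $D$, giving $d_s(g(z), g(w)) \leq D \leq D|z - w|$ trivially. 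Taking $\max(L, D)$ as the Lipschitz constant completes the argument.

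The only subtlety is when the segment from $z$ to $w$ passes through a pole of $g$, in which case $g \circ \gamma$ passes through $\infty$ in $\Chat$. This is harmless because $\infty$ is just an ordinary point of the spherical metric, and the continuous extension of $g^\#$ across the poles makes the length integral finite and well-defined. I expect this to be the main conceptual point worth highlighting; once one works with the spherical rather than the Euclidean derivative, the proof reduces to the standard compactness-plus-integration argument.
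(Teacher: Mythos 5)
Your proof is correct and follows essentially the same route as the paper: both arguments reduce the claim to the boundedness of the spherical derivative $g^\sharp$ on a compact set and then integrate along paths. The only differences are cosmetic --- the paper verifies boundedness near a pole by writing $g(z)=h_j(z)/(z-w_j)^{m_j}$ and computing $g^\sharp$ explicitly, whereas you use the cleaner identity $g^\sharp=(1/g)^\sharp$, and you spell out the final integration step (enlarging $K$ so that segments stay inside, and splitting off the case $|z-w|>1$) more carefully than the paper does.
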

\begin{proof}
As a meromorphic function, $g$ has at most finitely many poles $w_1, w_2, \ldots, w_m$ in $K$. Also, we note that we can, if necessary, slightly enlarge $K$ so that $\partial K$ does not contain any poles of $g$. Around each pole, we have a neighbourhood $V_j$ of $w_j$ and a positive integer $m_j$, the order of $w_j$ as a pole of $g$, such that
\begin{equation} \label{eq:g}
    g(z) = \frac{h_j(z)}{(z - w_j)^{m_j}} \quad \text{for every $z\in \overline{V_j}$},
\end{equation}
where $h_j$ is holomorphic in a neighbourhood of $\overline{V_j}$ with $h_j(w_j) \neq 0$ and $\overline{V_j}\subset K$. Now, we write $K$ as
\[ K = \overline{V_1}\cup\overline{V_2}\cup\cdots\cup\overline{V_m}\cup\left(K\setminus\bigcup_{j=1}^m V_j\right). \]
Since $K\setminus\bigcup_{j=1}^m V_j$ is a compact set without poles of $g$, the ``Euclidean-to-spherical'' derivative
\[ g^\sharp(z) = \frac{|g'(z)|}{1 + |g(z)|^2} \]
is well-defined and continuous, and thus bounded above on this set. Now, on each $\overline{V_j}$, we use (\ref{eq:g}) to obtain
\[ g^\sharp(z) = \frac{|z - w_j|^{m_j-1}\cdot|(z - w_j)h_j'(z) - m_jh_j(z)|}{|z - w_j|^{2m_j} + |h_j(z)|^2}, \]
which, since $h_j(w_j) \neq 0$, is also well-defined and continuous -- and thus bounded above. It follows that $g^\sharp$ is bounded above on $K$, whence $g|_K$ is Lipschitz continuous as promised.
\end{proof}

We are now ready to prove Theorem \ref{thm:fin}.

\begin{proof}[Proof of Theorem \ref{thm:fin}]
Take $z\in U$ and a neighbourhood $V$ of $z$ with $\overline{V}\subset U$. Since $(f^{n_k})_{k\geq 1}$ converges locally uniformly to a finite limit in $\overline{V}$, it follows that all $f^{n_k}(V)$ are contained in a single compact set $K\subset \Cx$. By Lemma \ref{lem:Lip}, $g|_K$ is Lipschitz continuous with some Lipschitz constant $M > 0$ for the Euclidean and spherical metrics. Now, take $\epsilon > 0$. If necessary, we pass to a smaller neighbourhood $V'\subset V$ such that
\[ \mathrm{diam}\left(f^{n_k}(V')\right) < \epsilon/M \quad\text{for every $k\geq 1$}, \]
where the diameter is taken with respect to the spherical metric -- notice that $V'$ is guaranteed to exist by the Arzelà-Ascoli theorem. Then, by using the commuting hypothesis and the fact that $g$ is Lipschitz continuous on $K$, we obtain that, for every $k\geq 1$,
\[ \mathrm{diam}\left(f^{n_k}\left(g(V')\right)\right) = \mathrm{diam}\left(g\left(f^{n_k}(V')\right)\right) \leq M\mathrm{diam}\left(f^{n_k}(V')\right) < \epsilon. \]
This implies that $f^{n_k}\left(g(V')\right)$ will never contain any compact set $K'\subset \Cx$ with diameter greater than $\epsilon$. Thus, by Proposition \ref{prop:blowup}, $g(z)$ cannot lie in $J(f)$.
\end{proof}

An immediate consequence of Theorem \ref{thm:fin} is that, whenever it can be assured that neither $f$ nor $g$ have escaping points in the Fatou set, then commuting implies having the same Julia set. This is the case if, for instance, $f$ and $g$ have finitely many singular values; see \cite[~p. 652]{BKL92} and \cite[~p. 3252]{RS99}. However, this also follows immediately from Theorem \ref{thm:Main}, and so we refrain from dwelling on it.

\subsection{The escaping case}
Much of this subsection is devoted to proving Theorem \ref{thm:Bf}. We begin by explaining Rippon and Stallard's definition \cite{RS05} of the fast escaping set for meromorphic functions with finitely many poles.

Let $f$ be a transcendental meromorphic function with at most finitely many poles. First, for any Jordan curve $\gamma\subset\Cx$, we define its \textit{outer set} to be the closure of its unbounded complementary component. A sequence of Jordan curves $\gamma_n$ with associated outer sets $E_n$, $n\in\mathbb{N}$, is called an \textit{outer sequence} for $f$ if it satisfies the following properties.
\begin{enumerate}[(i)]
    \item There exists $r > 0$ such that all poles of $f$ are contained in $\{z\in\Cx : |z| < r\}$ and all the curves $\gamma_n$ surround $\{z\in\Cx : |z| = r\}$;
    \item $\mathrm{dist}(0, \gamma_n)\to + \infty$ as $n\to +\infty$;
    \item For all $n\geq 1$, $\gamma_{n+1}\subset f(\gamma_n)$;
    \item For all $n\geq 1$, any component of $f^{-1}(E_{n+1})$ lies either in $E_n$ or in $\{z\in\Cx : |z| < r\}$.
\end{enumerate}
Rippon and Stallard showed that every transcendental meromorphic function with at most finitely many poles admits an outer sequence, and used this fact to define the fast escaping set as
\[ A(f) = \left\{z\in\Cx : \text{there exists $l\in\mathbb{N}$ such that $f^{n+l}(z)\in E_n$ for all $n\in\mathbb{N}$}\right\}, \]
where $E_n$ is an outer sequence for $f$. Originally, this set was denoted $B(f)$, but the definition above is equivalent to Bergweiler and Hinkkanen's if $f$ is entire (see \cite{RS12}). Hence, we denote it by $A(f)$ without fear of ambiguity. We also have the following properties of $A(f)$, proved in \cite{RS05}.

\begin{lemma} \label{lem:RS}
For any transcendental meromorphic function $f$ with finitely many poles, the following hold.
\begin{enumerate}[(i)]
    \item $A(f)$ is independent of the choice of outer sequence.
    \item $A(f)$ is non-empty and intersects the Julia set.
    \item If $U$ is a Fatou component that intersects $A(f)$, then $U\subset A(f)$. Furthermore, $U$ is a wandering domain.
    \item $J(f) = \partial A(f)$, and if $f$ does not have wandering domains, then $J(f) = \overline{A(f)}$.
\end{enumerate}
\end{lemma}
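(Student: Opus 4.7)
My plan is to derive all four parts from condition (iv) in the definition of an outer sequence, which controls how orbits can leave the outer sets, together with the finiteness of the pole set ensuring that the exceptional components of pre-images are confined to a single bounded region.

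For part (i), I would establish independence by a comparison argument: given two outer sequences $\{E_n\}$ and $\{E_n'\}$, show that there exists a fixed $k$ with $E_{n+k}\subset E_n'$ for all $n$, and symmetrically in the other direction. Combining $\mathrm{dist}(0,\gamma_n)\to +\infty$ with iterated applications of conditions (iii) and (iv) propagates an initial nesting $E_k\subset E_0'$ to all indices. Once this is in hand, the defining property $f^{n+l}(z)\in E_n$ translates directly into $f^{n+l+k}(z)\in E_n'$, giving $A(f)\subset A'(f)$; the reverse inclusion is symmetric.

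For non-emptiness in part (ii), I would construct a point of $A(f)$ by a nested pre-image argument. Fix $n_0$ large, set $F_0 = E_{n_0}$, and recursively let $F_{k+1}$ be the union of those components of $f^{-1}(E_{n_0+k+1})\cap F_k$ that, by condition (iv), lie in $E_{n_0+k}$ rather than in the bounded region around the poles. Condition (iii) ensures each $F_k$ is non-empty, so the nested intersection $\bigcap_k F_k$ yields points whose orbits thread through the outer sets, hence belong to $A(f)$. For the intersection with $J(f)$, observe that $A(f)$ is unbounded while (iii) forces any Fatou component meeting $A(f)$ to be a wandering domain; an accumulation argument then produces a point of $A(f)$ in $J(f)$. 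For (iii) itself, I would use normality: if $z\in U\cap A(f)$ with $f^{n+l}(z)\in E_n$, then every $w\in U$ has $f^{n+l}(w)$ in the same Fatou component as $f^{n+l}(z)$, which for large $n$ lies entirely inside $E_n$ since it is connected and cannot straddle $\gamma_n$ as $\gamma_n$ recedes to infinity. To see $U$ must wander, orbits in periodic Fatou components (attracting, parabolic, Siegel, Herman, Baker) all escape too slowly to lie in $A(f)$.

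For part (iv), the inclusion $\partial A(f)\subset J(f)$ is immediate from (iii), since each Fatou component is either wholly in $A(f)$ or disjoint from it. For the reverse, given $z\in J(f)$ with a neighborhood $V$, I would apply the blowing-up property of Proposition \ref{prop:blowup} to find $n$ with $f^n(V)\ni w$ for some $w\in A(f)$ produced by (ii); a suitable pre-image of $w$ inside $V$ is then also in $A(f)$, and similarly $V$ contains non-fast-escaping Julia points. In the absence of wandering domains, (iii) forces $A(f)\subset J(f)$, whence $\overline{A(f)}\subset J(f)=\partial A(f)\subset\overline{A(f)}$. The hardest step will be (i), where one must translate a statement about backward invariance (condition (iv)) into a uniform forward nesting $E_{n+k}\subset E_n'$ without allowing the bounded pole region to disrupt the escape rate.
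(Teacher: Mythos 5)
The paper does not prove this lemma at all: it is imported verbatim from Rippon and Stallard \cite{RS05} (``proved in \cite{RS05}''), so there is no in-paper argument to compare yours against, and your sketch has to stand on its own. As it stands it has genuine gaps. The most serious is in part (iii): you argue that $f^{n+l}(w)$ lies in the same Fatou component as $f^{n+l}(z)\in E_n$ and that this component ``cannot straddle $\gamma_n$'' because it is connected and $\gamma_n$ recedes to infinity. That is unjustified -- Fatou components of meromorphic functions can be unbounded, and even a bounded component containing $f^{n+l}(z)$ may perfectly well cross $\gamma_n$; connectedness gives you nothing here. The actual proof (Theorem 2 of \cite{RS05}, and its entire analogue in \cite{RS12}) replaces this with a quantitative growth comparison: one takes a compact connected set $K\subset U$ containing $z$ and $w$ and uses a distortion or maximum-modulus estimate to show that $|f^{n+l}(w)|$ can lag at most a bounded number of steps behind $|f^{n+l}(z)|$, so that $w\in A(f)$ after adjusting $l$. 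Without some such estimate, normality alone does not give (iii), and since your proof of $\partial A(f)\subset J(f)$ in (iv) rests on (iii), that inclusion is also left hanging.

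Two further points. In (i), you yourself concede the key step is open, and it genuinely is: property (iii) of an outer sequence controls the curves $\gamma_n$, not the outer sets $E_n$, and property (iv) concerns pre-images within a single sequence, so neither propagates an initial nesting $E_k\subset E_0'$ forward to $E_{n+k}\subset E_n'$. The argument in \cite{RS05} instead works at the level of orbits, comparing both outer sequences to iterated maximum-modulus curves once the orbit has permanently left the bounded region containing the poles. In (ii), the sets $F_k$ you intersect are closed but unbounded (each $E_n$ is a closed neighbourhood of infinity), so the nested intersection can a priori be empty; the fix is to run the itinerary argument on the compact curves themselves, using $\gamma_{n+1}\subset f(\gamma_n)$ together with a covering lemma such as Lemma \ref{lem:RSslow}, which also delivers the point in $J(f)$ once one checks the curves meet the Julia set. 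The overall architecture of your sketch matches the source, but these three steps need real arguments rather than the ones proposed.
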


Next, we set up some preliminary results. The first one is a version by Domínguez \cite{Dom98} of a theorem of Bohr's, and the second is a consequence of the open mapping theorem.

\begin{lemma} \label{lem:Dom}
Let $f$ be analytic on $\{z\in\Cx : |z| > R\}$ for some $R > 0$ and such that $M(r, f) \to +\infty$ as $r\to +\infty$. Then, for any large enough $\rho > R$, $f\left(\{z\in\Cx : R < |z| < \rho\}\right)$ contains a circle of radius $r' > cM(\rho/2, f)$, where $c > 0$ is an absolute constant.
\end{lemma}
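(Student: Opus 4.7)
The plan is to reduce to the classical Bohr–Landau theorem for the disc, which says that for $g$ holomorphic on $D(0,1)$ with $g(0) = 0$, the image $g(D(0,1))$ contains a disc about the origin of radius $c_0 \cdot M(1/2, g)$ for some absolute constant $c_0 > 0$. The idea is to apply this to a rescaling of $f$ on a disc sitting inside the annulus $A = \{R < |z| < \rho\}$ to obtain a disc inside $f(A)$ of radius comparable to $M(\rho/2, f)$, and then to extract from it a full circle about $0$ using that the centre of this disc is not too far from the origin.

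Concretely, I would seek a disc $D \subset A$ together with a point $z_0 \in D$ such that (i) $|f(z_0)|$ is bounded uniformly in $\rho$, and (ii) the maximum of $|f(z) - f(z_0)|$ on a concentric sub-circle of $D$ is at least a uniform constant times $M(\rho/2, f)$. Condition (i) can be secured via the zeroth coefficient of the Laurent expansion $f(z) = \sum_{n \in \mathbb{Z}} a_n z^n$ on $\{|z| > R\}$: since $a_0$ equals the circular average of $f$ over any $\{|z|=r\} \subset \{|z|>R\}$, some point on such a circle satisfies $|f| \leq |a_0|$. Granting a disc and point satisfying (i) and (ii), Bohr–Landau applied to $\zeta \mapsto f(z_0 + r\zeta) - f(z_0)$ (where $r$ is the radius of $D$) delivers a disc of radius comparable to $M(\rho/2, f)$ about $f(z_0)$ inside $f(A)$. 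Since its centre has modulus at most $|a_0|$ and its radius tends to infinity with $\rho$, for $\rho$ large this disc contains a full geometric circle $\{|w| = r'\}$ about the origin with $r' \geq c M(\rho/2, f)$, proving the claim.

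The main obstacle is securing condition (ii) simultaneously with (i): the point $z_0$ where $|f(z_0)|$ is small lies on some circle $\{|z| = \rho/2\}$ and may sit in an angular sector far from the max-modulus point of $f$ restricted to that circle. To overcome this I would use Hadamard's three-circles theorem to transfer the growth of $M(r, f)$ from $\{|z|=\rho/2\}$ to circles of comparable radius centred at $z_0$. For the disc $D = D(z_0, \rho/2 - R)$, which lies in $A$ for $\rho > 2R$, the concentric sub-circles of $D$ sweep out an annulus whose outer radius is comparable to $\rho$; Hadamard then guarantees that the maximum of $|f|$ on the outermost such sub-circle grows at the same rate as $M(\rho/2, f)$, and subtracting $|f(z_0)| \leq |a_0|$ leaves a lower bound that is still a uniform fraction of $M(\rho/2, f)$. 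This three-circles transfer — passing from growth on an origin-centred circle to growth on a $z_0$-centred one — is the technical heart of the argument, and is where Domínguez's careful estimates enter.
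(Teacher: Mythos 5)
The paper does not actually prove this lemma---it is quoted from Dom\'inguez \cite{Dom98}---so I am judging your argument on its own merits. It contains two genuine errors, and I do not think either can be repaired within the architecture you propose.

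First, the ``Bohr--Landau theorem'' you take as your starting point is false as stated. For $g$ holomorphic on $\mathbb{D}$ with $g(0)=0$, the image $g(\mathbb{D})$ need \emph{not} contain a disc about the origin of radius $c_0 M(1/2,g)$: take $g(z)=e^{nz}-1$, which omits the value $-1$ while $M(1/2,g)\geq e^{n/2}-1\to\infty$. Bohr's theorem asserts only that the image contains some \emph{circle} $\{|w|=t\}$ with $t>c\,M(1/2,g)$, i.e.\ a circumference, not a disc. This distinction is fatal to your final step: applying the correct (circle) version to $\zeta\mapsto f(z_0+r\zeta)-f(z_0)$ produces a circle centred at $f(z_0)$, and a circle of large radius centred at a point of small modulus neither contains nor can be shrunk to any circle centred at the origin. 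Only the (false) disc version would permit that extraction.

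Second, the Hadamard three-circles ``transfer'' is not a legitimate use of that theorem, and the inequality it is meant to deliver is false. Three-circles gives log-convexity of $r\mapsto\log M(r,f)$ for concentric circles about a \emph{fixed} centre; it does not compare maxima over origin-centred circles with maxima over $z_0$-centred ones. Concretely, take $f(z)=e^{z}$ and $R=1$: the point of $\{|z|=\rho/2\}$ where $|f|\leq|a_0|=1$ is $z_0=-\rho/2$, and on the entire disc $D(z_0,\rho/2-1)$ one has $|e^{z}|=e^{\mathrm{Re}\,z}\leq e^{-1}$, whereas $M(\rho/2,f)=e^{\rho/2}$. So your conditions (i) and (ii) cannot be satisfied simultaneously by this choice of $z_0$ and $D$; the tension between where $f$ is small and where it is large is precisely the difficulty, and no single simply connected sub-disc of the annulus will do (in this example the promised circle in $f(A)$ is swept out by a full cross-section of the annulus). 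Your observation that the Laurent coefficient $a_0$ furnishes a point of the annulus with $|f|\leq|a_0|$ is correct and is indeed an ingredient of the real proof, but the rest of Dom\'inguez's argument runs Bohr's omitted-value (Landau--Schottky type) estimate globally on the annulus: roughly, if $f(A)$ omitted a point on every origin-centred circle of radius between $|a_0|+1$ and $N$, one bounds $M(\rho/2,f)$ by a constant multiple of $N$, and choosing $N$ just below $M(\rho/2,f)/C$ yields the covered circle.
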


\begin{lemma} \label{lem:BIIB}
Let $h:\Cx\to\Chat$ be a meromorphic function. Then, for any bounded open set $D$ such that $\overline{D}$ contains no poles of $h$, we have $\partial h(D)\subset h(\partial D)$.
\end{lemma}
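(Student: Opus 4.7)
The plan is to combine the open mapping theorem with a compactness-and-continuity argument, using the hypothesis that $\overline{D}$ contains no poles of $h$ to ensure that $h$ is holomorphic (hence continuous) on an open neighborhood of $\overline{D}$. I would first dispose of the trivial case where $h$ is constant on $\Cx$, in which both $\partial h(D)$ and $h(\partial D)$ reduce to the same single point and the inclusion is automatic.

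In the main case, given $w\in\partial h(D)$, my goal is to exhibit a preimage of $w$ on $\partial D$. Since $w\in\overline{h(D)}$, I would pick a sequence $z_n\in D$ with $h(z_n)\to w$; by compactness of $\overline{D}$, I extract a subsequence $z_{n_k}\to z\in\overline{D}$ and conclude $h(z)=w$ from continuity of $h$ on $\overline{D}$. It then remains only to show $z\in\partial D$.

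To rule out $z\in D$, let $D_0$ denote the component of $D$ containing $z$. Since $\overline{D}$ is pole-free, so is $D_0$, and $h$ is holomorphic there. If $h|_{D_0}$ were constant, the identity theorem would force $h$ to be globally constant on $\Cx$, contradicting our standing assumption. Hence $h|_{D_0}$ is non-constant, and the open mapping theorem sends $D_0$ to an open set $h(D_0)\subset h(D)$ containing $w=h(z)$. This places $w$ in the interior of $h(D)$, contradicting $w\in\partial h(D)$; thus $z\in\partial D$ and $w=h(z)\in h(\partial D)$.

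The only subtle step is verifying the hypothesis of the open mapping theorem—namely, the non-constancy of $h|_{D_0}$—which the identity theorem dispatches in a single line. Aside from this, the proof is essentially forced by the combined action of compactness, continuity, and the openness of $h$ as a map, so I do not anticipate any further obstacle.
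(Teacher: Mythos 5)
Your proof is correct, and it is precisely the argument the paper has in mind: the paper offers no written proof of this lemma, remarking only that it ``is a consequence of the open mapping theorem,'' and your compactness-plus-continuity extraction of a boundary preimage, with the identity theorem ruling out a constant component, is the standard way to flesh that remark out.
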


With these at hand, we can prove the following lemma -- which will be a central tool in our proof of Theorem \ref{thm:Bf}.

First, it is important to remark that, whenever we talk about the outer boundary (or outer boundary component) of a bounded set $D$, we mean the boundary of its unbounded complementary component.

\begin{lemma} \label{lem:outseqs}
Let $f$ and $g$ be commuting transcendental meromorphic functions with finitely many poles. Then, $f$ has an outer sequence $(F_n)_{n\in\mathbb{N}}$ with Jordan curves $\gamma_n$ such that the outer boundaries of $g(\gamma_n)$ are Jordan curves $\Gamma_n$ whose outer sets form an outer sequence $(E_n)_{n\in\mathbb{N}}$ for $f$.
\end{lemma}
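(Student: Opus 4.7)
The plan is to construct the outer sequence $(\gamma_n, F_n)$ for $f$ by a careful adaptation of Rippon and Stallard's construction in \cite{RS05}, chosen so that the images $g(\gamma_n)$ also enclose an outer sequence for $f$. Since commuting forces $f$ and $g$ to share their (finitely many) poles, I fix $r > 0$ large enough that all poles of both functions lie in $\{|z| < r\}$; both $f$ and $g$ are then analytic on $\{|z| > r\}$ with essential singularities at infinity. Moreover, I can perturb each $\gamma_n$ slightly to guarantee that $g(\gamma_n)$ avoids the finitely many poles of $f$, since $g^{-1}(\{\text{poles of }f\})$ is a discrete set and so this is only a mild genericity requirement.

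The construction proceeds inductively: start with $\gamma_1$ a large Jordan curve surrounding $\{|z| \leq r\}$, and at step $n$ take $\gamma_{n+1}$ to be the outer boundary of $f(\gamma_n)$. Standard arguments on outer boundaries of continua, together with Lemma \ref{lem:BIIB}, show that each $\gamma_{n+1}$ is a Jordan curve, and Lemma \ref{lem:Dom} applied to $f$ guarantees $\mathrm{dist}(0, \gamma_n) \to \infty$. Because the perturbation ensures $g(\gamma_n)$ avoids poles of $f$, the image $g(\gamma_n)$ is a bounded compact connected set, so its outer boundary $\Gamma_n$ is well defined. Applying Lemma \ref{lem:Dom} to $g$ on an annular neighbourhood of $\gamma_n$ shows that $g$ sends $\gamma_n$ winding around every point of modulus up to order $M(\rho_n/2, g)$, where $\rho_n \sim \mathrm{dist}(0,\gamma_n)$, forcing $\Gamma_n$ to be a Jordan curve with $\mathrm{dist}(0, \Gamma_n) \to \infty$.

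Next I verify that $(\Gamma_n, E_n)$ is an outer sequence for $f$. Conditions (i) and (ii) are immediate from the construction. Condition (iii), $\Gamma_{n+1} \subset f(\Gamma_n)$, is the key nontrivial step. By commutativity, $\gamma_{n+1} \subset f(\gamma_n)$ yields $g(\gamma_{n+1}) \subset g(f(\gamma_n)) = f(g(\gamma_n))$, and so $\Gamma_{n+1} \subset f(g(\gamma_n))$. To upgrade this inclusion to $\Gamma_{n+1} \subset f(\Gamma_n)$ I combine Lemma \ref{lem:BIIB} for $f$ with a topological argument: although the Jordan domain bounded by $\Gamma_n$ contains poles of $f$, the set $f(g(\gamma_n))$ is compact (since $g(\gamma_n)$ avoids $f$'s poles), and any point on its outer boundary must be accessible from infinity through the unbounded complement, forcing such points to lie in $f(\Gamma_n)$ rather than in the image of interior parts of $g(\gamma_n)$. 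Condition (iv) for both $(\gamma_n, F_n)$ and $(\Gamma_n, E_n)$ then follows from the analogous pullback property of $\gamma_n$, combined with commutativity applied to the preimages under $f$.

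The main obstacle is making condition (iii) fully rigorous. The difficulty is that although $g(\gamma_n)$ has been chosen to avoid the poles of $f$ (so that $f(g(\gamma_n))$ is compact), the Jordan domain enclosed by $\Gamma_n$ does contain poles of $f$, so $f$ of the whole domain is unbounded. One must therefore carefully distinguish those parts of $f(g(\gamma_n))$ that are reachable from infinity, and hence form its outer boundary, from those that are ``trapped'' behind $f(\Gamma_n)$; this is where Lemma \ref{lem:BIIB} and the accessibility-from-infinity argument do the essential topological work. The verification of condition (iv) follows a similar but easier pattern, amounting to pullback bookkeeping through the commuting relation.
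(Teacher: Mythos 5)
Your proposal correctly isolates condition (iii), $\Gamma_{n+1}\subset f(\Gamma_n)$, as the crux, but the argument you give there has a genuine gap. Perturbing $\gamma_n$ so that $g(\gamma_n)$ misses the poles of $f$ only buys you compactness of $f\left(g(\gamma_n)\right)$; it does nothing to control its \emph{size}. The real danger is that $\gamma_n$ may contain arcs that $g$ maps into the disc $\Delta$ containing the poles of $f$, arbitrarily close to those poles; $f$ then sends such arcs to sets of arbitrarily large modulus, which are perfectly ``accessible from infinity'' and can therefore contribute to the outer boundary of $f\left(g(\gamma_n)\right)$. Your claim that accessibility from infinity forces outer-boundary points to lie in $f(\Gamma_n)$ is thus unsupported: a point of $\Gamma_{n+1}$ could a priori be the image under $f$ of a point of $g(\gamma_n)$ lying deep inside $\intr(\Gamma_n)$, near a pole of $f$, and the inclusion $\Gamma_{n+1}\subset f(\Gamma_n)$ would fail at that point. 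Nothing in your setup rules this out.

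The missing idea is to use the commuting relation itself to bound the offending images. The paper works with the topological annulus $A_n=\intr(\gamma_n)\setminus\Delta$ and removes from it both $g^{-1}(\Delta)$ and $f^{-1}(\Delta)$, obtaining a set $\Omega_n$ on which $f\circ g=g\circ f$ is analytic. On the ``bad'' boundary pieces the composition is then controlled by evaluating it in the other order: for $w\in\partial f^{-1}(\Delta)$ one has $f(w)\in\partial\Delta$, so $|f(g(w))|=|g(f(w))|\le M(r_0,g)$, while for $w\in\partial g^{-1}(\Delta)$ one has $g(w)\in\partial\Delta$, so $|f(g(w))|\le M(r_0,f)$. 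Both bounds are smaller than the radius $cM(R_{n+1}/2,g)$ of a circle that $f\left(g(\Omega_n)\right)$ is shown to contain, so the outer boundary of $f\left(g(\Omega_n)\right)$ can only consist of images of points of $\gamma_n$, and one then identifies it with both $\Gamma_{n+1}$ and the outer boundary of $f\left(\intr(\Gamma_n)\setminus\Delta\right)$, which lies in $f(\Gamma_n)$ by Lemma \ref{lem:BIIB}. Without this quantitative use of commutativity (or some substitute for it), condition (iii) does not follow. A secondary, more minor point: taking $\gamma_{n+1}$ to be the outer boundary of the image of the \emph{open annulus} $\intr(\gamma_n)\setminus\Delta$, rather than of the compact set $f(\gamma_n)$, is what allows the open mapping theorem to guarantee that $\gamma_{n+1}$ (and likewise $\Gamma_n$) is actually a Jordan curve.
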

\begin{proof}
We start by choosing $r_0 > 0$ such that $\Delta = \{z\in\Cx : |z| \leq r_0\}$ contains all poles of both $f$ and $g$ (with none on the boundary), $M(r_0, f) > r_0$ and $M(r_0, g) > r_0$. Next, take $R_1 > r_0$ large enough that Lemma \ref{lem:Dom} applies, $cM(R_1/2, g) > \max\{M(r_0, f), M(r_0, g)\}$ and $cM(r/2, f) > r$ for all $r\geq R_1$, where $c$ is the constant given by Lemma \ref{lem:Dom}. We define an increasing sequence of radii by
\[ R_{n+1} = cM(R_n/2, f) > R_n, \]
whence it follows that $R_n\to +\infty$ as $n\to +\infty$. Notice that since $f$ and $g$ are holomorphic in $\Cx\setminus\Delta$,
\begin{equation} \label{eq:ineqPhil}
    M(r_0, f) < cM(R_n/2, g) \quad\text{and}\quad M(r_0, g) < cM(R_n/2, g) \quad\text{for all $n\in\mathbb{N}$,}
\end{equation}
by the maximum modulus principle applied to the annuli $A(r_0, R_{n+1}/2)\supset A(r_0, R_n/2)$. Pick a Jordan curve $\gamma_1$ surrounding $\{z\in\Cx : |z| = R_1\}$, and inductively define $\gamma_{n+1}$ as the outer boundary component of $f\left(\intr(\gamma_n)\setminus\Delta\right)$; here and throughout, $\intr(\gamma)$ denotes the bounded complementary component of the Jordan curve $\gamma$. Notice that $f$ maps domains onto domains by the open mapping theorem, and hence $\gamma_{n+1}$ is a Jordan curve. Furthermore, by applying Lemma \ref{lem:Dom} to the topological annuli $\intr(\gamma_n)\setminus\Delta$, $n\in\mathbb{N}$, we see that
\begin{equation} \label{eq:gamma}
    \text{$\gamma_n$ surrounds the circle $\{z\in\Cx : |z| = R_n\}$ for every $n\in\mathbb{N}$.}
\end{equation}
Finally, we set $F_n$ as the outer set for $\gamma_n$ for all $n\in\mathbb{N}$, and see immediately that $F_n$ defines an outer sequence for $f$. Indeed, properties (i) and (ii) follow from (\ref{eq:gamma}) and our choice of the sequence $R_n$; property (iii) follows from Lemma~\ref{lem:BIIB} and the definition of $\gamma_{n+1}$, and property (iv) is a consequence of the fact that $f\left(\intr(\gamma_n)\setminus\Delta\right)\subset \intr(\gamma_{n+1})$.

Now, define $\Gamma_n$ as the outer boundary component of $g\left(\intr(\gamma_n)\setminus\Delta\right)$ for all $n\in\mathbb{N}$. As was the case with $\gamma_{n+1}$ and $\gamma_n$, $\Gamma_n$ is a Jordan curve and is contained in $g(\gamma_n)$. Then, by (\ref{eq:ineqPhil}), (\ref{eq:gamma}) and Lemma \ref{lem:Dom}, $g\left(\intr(\gamma_n)\setminus\Delta\right)$ contains $\{z\in\Cx: |z| = cM(R_n/2, g)\}$, so, by Lemma \ref{lem:BIIB}, $\Gamma_n$ is a Jordan curve surrounding this circle. We want to show that the outer sets $E_n$ associated to $\Gamma_n$ form an outer sequence for $f$.

Properties (i) and (ii) are immediate consequences of the fact that $\Gamma_n$ surrounds $\{z\in\Cx : |z| = cM(R_n/2, g)\}$, since $R_n\to+\infty$ and $cM(R_n/2, g) > M(r_0, g) > r_0$. Proving that $E_n$ satisfies property (iii), that is, that $\Gamma_{n+1}\subset f(\Gamma_n)$, is a considerably more delicate task. First, since $(\gamma_n)_{n\in\mathbb{N}}$ is itself an outer sequence, we have
\[ \Gamma_{n+1}\subset g(\gamma_{n+1})\subset g\left(f(\gamma_n)\right) = f\left(g(\gamma_n)\right), \]
where the last equality uses the fact that $f$ and $g$ commute. However, since $\Gamma_n\subset g(\gamma_n)$ and not the opposite, we are not done. In particular, we must take care of any points on $\gamma_n$ that are mapped into $\Delta$ by $g$, since those can be influenced by the poles of $f$. In order to control this, we shall make small modifications to the annuli used to define $\gamma_{n+1}$ and $\Gamma_n$, and then convince ourselves that these modifications were not important.

For any $n\geq 1$, denote by $A_n$ the topological annulus $\intr(\gamma_n)\setminus\Delta$, and by $\Omega_n$ the set obtained by removing $g^{-1}(\Delta)$ and $f^{-1}(\Delta)$ from $A_n$ (that is the grey-shaded region in Figure \ref{fig:iii}). Then, since $g(\Omega_n)$ omits $\Delta$ by the definition of $\Omega_n$, and thus omits the poles of $f$, we see that $f\circ g$ is analytic in $\Omega_n$. By the commuting hypothesis, $g\circ f$ is also analytic and equals $f\circ g$ throughout $\Omega_n$.

\begin{figure}[!h]
    \centering
    \begin{tikzpicture}
        \node[anchor=south west,inner sep=0] at (0,0) {\includegraphics[width=\textwidth]{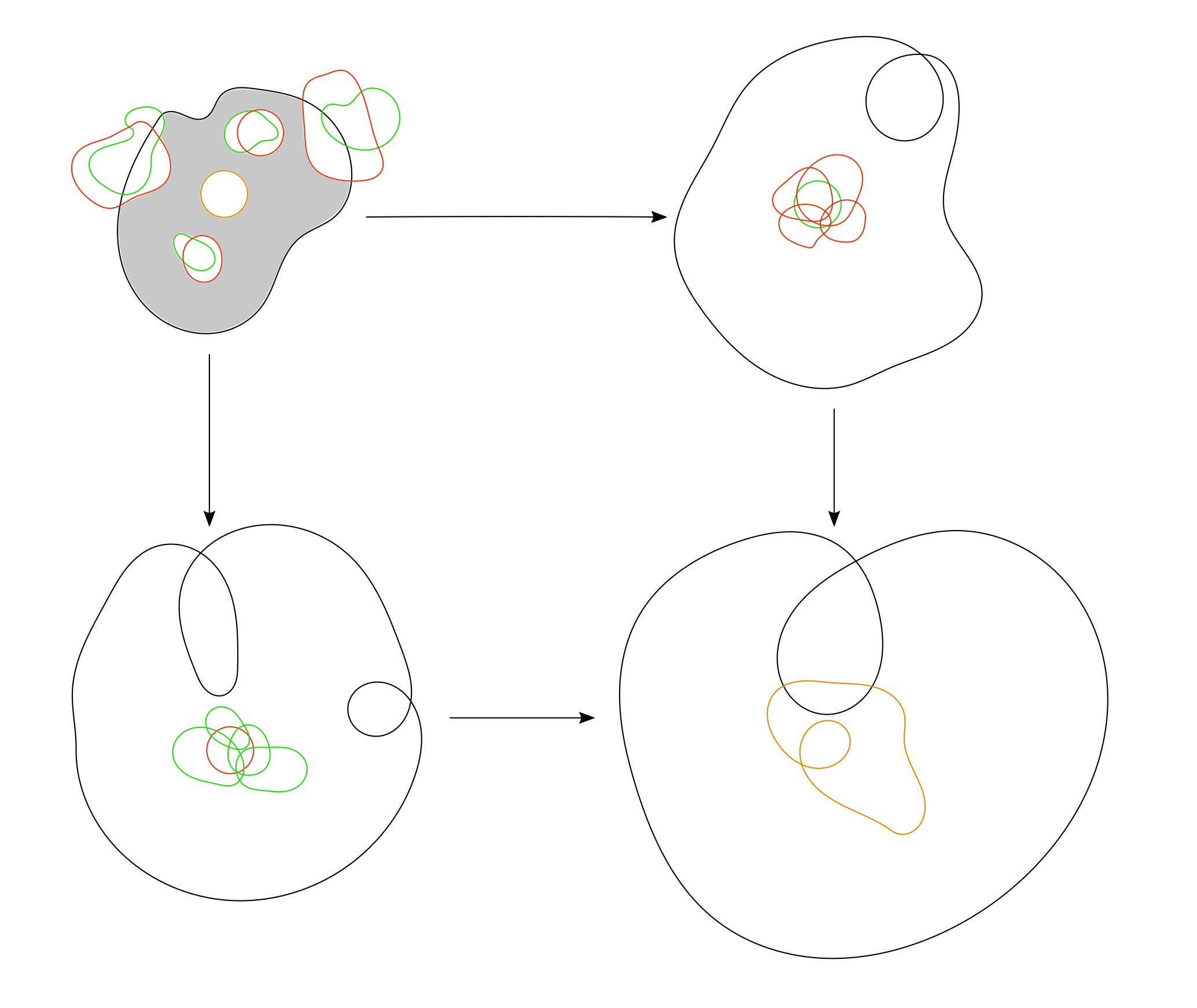}};
        \node[above] at (6,9.5) {$f$};
        \node[above] at (6.2,3.5) {$f$};
        \node at (2.35,6.8) {$g$};
        \node at (10.15,6.6) {$g$};
        \node at (2.9,11.2) {$\gamma_n$};
        \node[myred,anchor=east] at (1.1,9.6) {$g^{-1}(\Delta)$};
        \node[mygreen,anchor=west] at (4.7,10.6) {$f^{-1}(\Delta)$};
        \node[myorange] at (2.7,9.78) {$\Delta$};
        \node[mygrey] at (3.7,8.9) {$\Omega_n$};
        \node at (3.4,6) {$\Gamma_n$};
        \node at (11.8,5.9) {$\Gamma_{n+1}$};
        \node at (11.8,9.6) {$\gamma_{n+1}$};
        \node[myorange] at (11.7,3.5) {$f\circ g(\partial\Delta)$};
    \end{tikzpicture}
    \caption{The relevant sets and curves in the proof of Lemma \ref{lem:outseqs}.}
    \label{fig:iii}
\end{figure}

Notice that the outer boundary of $\Omega_n$ consists of pieces of $\gamma_n$, $\partial g^{-1}(\Delta)$ and $\partial f^{-1}(\Delta)$; in Figure \ref{fig:iii}, these are black, red and green pieces, respectively. We shall show that the outer boundary of $f\left(g(\Omega_n)\right) = g\left(f(\Omega_n)\right)$ consists solely of images of black pieces.

First, the outer boundary of $f(\Omega_n)$ contains, by Lemma \ref{lem:BIIB}, only parts of $\gamma_{n+1}$ and $f\left(\partial g^{-1}(\Delta)\right)$, since any pieces of $f\left(\partial f^{-1}(\Delta)\right)$ are mapped into $\partial\Delta$ by definition. Mapping this forward under $g$, we see that all green pieces are ``trapped'' inside a circle of radius $M(r_0, g) < cM(R_{n+1}/2, g)$ by (\ref{eq:ineqPhil}). By an analogous argument considering the images of red pieces under $g\circ f$, we see that these are trapped inside a circle of radius $M(r_0, f) < cM(R_{n+1}/2, g)$.

It follows that any boundary point of $f\left(g(\Omega_n)\right)$ outside a circle of radius $cM(R_{n+1}/2, g)$ is contained in $f\left(g(\gamma_n)\right)$; in particular, $f\left(g(\Omega_n)\right) = g\left(f(\Omega_n)\right)$ contains the circle of radius $cM(R_{n+1}/2, g)$ and its outer boundary is $\Gamma_{n+1}$. Finally, since we know (by the commuting hypothesis and (\ref{eq:ineqPhil})) that $f\left(g\left(\partial f^{-1}(\Delta)\right)\right)$ is surrounded by $\{z\in\Cx : |z| = cM(R_{n+1}/2, g)\}$, the outer boundary of $f\left(g(\Omega_n)\right)$ equals that of $f\left(\intr(\Gamma_n)\setminus\Delta\right)$, which is contained in $f(\Gamma_n)$ by Lemma \ref{lem:BIIB}, and thus $\Gamma_{n+1}\subset f(\Gamma_n)$.

Finally, property (iv) follows from (iii). Indeed, consider the set $D = \intr(\Gamma_n)\setminus\Delta$. We know, by continuity and openness of $f$, that the outer boundary of $f(D)$ is a Jordan curve, which -- by (iii) -- must be equal to $\Gamma_{n+1}$. Consequently, no point in $D$ is mapped to $E_{n+1} = \Cx\setminus\intr(\Gamma_{n+1})$ by $f$. We conclude that $(E_n)_{n\in\mathbb{N}}$ is an outer sequence for $f$.
\end{proof}

We use our hard-won outer sequence to complete the proof of Theorem \ref{thm:Bf} as follows.

\begin{proof}[Proof of Theorem \ref{thm:Bf}]
Since the fast escaping set is not affected by our choice of outer sequence, we pick $(F_n)_{n\in\mathbb{N}}$ and $(E_n)_{n\in\mathbb{N}}$ to be the outer sequences given by Lemma \ref{lem:outseqs}. We assume now that $z$ is an escaping point of $f$ and $g(z)\in A(f)$, and show that this leads to a contradiction. By the definition of $A(f)$, there exists $l\in\mathbb{N}$ such that $f^{n+l}\left(g(z)\right)\in E_n$ for all $n\in\mathbb{N}$, and by the commuting hypothesis this is equivalent to $g\left(f^{n+l}(z)\right)\in E_n$, which in turn is the same as $f^{n+l}(z)\in g^{-1}(E_n)$ for all $n\in\mathbb{N}$.

We claim that our choice of outer sequences implies that, for every $n\in\mathbb{N}$, a component $g^{-1}(E_n)$ is either a neighbourhood of a pole of $g$ or is contained in $F_n$. Indeed, if $\Gamma_n$ and $\gamma_n$ are the Jordan curves associated with $E_n$ and $F_n$ respectively, then by Lemma \ref{lem:outseqs} we have that $g\left(\intr(\gamma_n)\setminus\Delta\right)\subset \intr(\Gamma_n)$. As $E_n = \Cx\setminus\intr(\Gamma_n)$ and $F_n = \Cx\setminus\intr(\gamma_n)$ by definition, our claim follows. Since our original point $z$ is escaping and all poles of $g$ are contained in the bounded set $\Delta$, it follows that $f^{n+l}(z)\in g^{-1}(E_n)$ means, in fact, that $f^{n+l}(z)\in F_n$, which in turn implies $z\in A(f)$; we have obtained our contradiction.
\end{proof}

\begin{remark}
A small modification of the proof of Lemma \ref{lem:outseqs} yields a similar result to Theorem \ref{thm:Bf} in the case where $f$ and $g$ are transcendental meromorphic functions with finitely many poles such that $h\circ f = g\circ h$ for some entire function $h$. In this case, we deduce that $h^{-1}\left(A(g)\right)\subset A(f)$, thus generalising \cite[~Theorem 5]{BH99}. To prove this, we define $\gamma_n$ to be an outer sequence for $f$ and $\Gamma_n\subset h(\gamma_n)$ to be an outer sequence for $g$; the only necessary modification is to take $\Omega_n = A_n\setminus h^{-1}(\Delta)$. A particularly interesting application of this is the case where $g$ is in class $\mathcal{P}$, $f$ is entire and $h = \exp$, so that it yields that $\exp^{-1}\left(A(g)\right) \subset A(f)$.
\end{remark}

Finally, we are ready to prove Theorem \ref{thm:main}.

\begin{proof}[Proof of Theorem \ref{thm:main}]
We are going to prove that $g\left(F(f)\right)\subset F(f)$, which implies (as indicated in the introduction) that $F(f)\subset F(g)$ and hence, by symmetry, that $F(f) = F(g)$.

Suppose, then, there exists some $z$ in a component $U\subset F(f)$ such that $g(z)\in J(f)$. If $U$ is not an escaping component, then Theorem \ref{thm:fin} applies and we get a contradiction. If it is escaping, it cannot be fast escaping since $A(f)\subset J(f)$; we will apply Theorem \ref{thm:Bf} and obtain a contradiction.

To this end, take a neighbourhood $V\subset U$ of $z$. By the open mapping theorem, $g(V)$ is a neighbourhood of $g(z)\in J(f)$, and so by Lemma \ref{lem:RS}(iv) there exists some $w'\in g(V)$ such that $w'\in A(f)$. But $w'$ is the image under $g$ of some $z'\in V\subset U$, and since $U$ is not fast escaping this contradicts Theorem \ref{thm:Bf}; we are done.
\end{proof}

We end this section with a proof of Theorem \ref{thm:BRS}, after recalling a result by Rippon and Stallard \cite{RS08}.

\begin{lemma} \label{lem:RSBaker}
Suppose $f$ is a transcendental meromorphic function with finitely many poles and $U$ is a multiply connected wandering domain of $f$. Then, $U$ is a Baker wandering domain if and only if infinitely many of the components $U_n\supset f^n(U)$, $n = 0, 1, \ldots$, are multiply connected.
\end{lemma}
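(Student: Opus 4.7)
The forward direction is immediate from the definition: if $U$ is a Baker wandering domain, then $f^n(U)$ surrounds the origin for all sufficiently large $n$, and hence each $U_n \supset f^n(U)$ surrounds the origin and is in particular multiply connected.

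For the converse, I would introduce the \emph{filled} components $\widetilde{U}_n$, defined as $U_n$ together with all of its bounded complementary components, and aim to establish the invariance
\[ f(\widetilde{U}_n) \subset \widetilde{U}_{n+1} \qquad \text{whenever $\widetilde{U}_n$ contains no pole of $f$.} \]
The argument is the standard one: $\partial \widetilde{U}_n \subset \partial U_n \subset J(f)$, so by Lemma~\ref{lem:BIIB} the boundary of the bounded open set $f(\widetilde{U}_n)$ also lies in $J(f)$, and since $f(U_n)\subset U_{n+1}$ the connected image $f(\widetilde{U}_n)$ must sit inside the ``hull'' of $U_{n+1}$, that is, inside $\widetilde{U}_{n+1}$. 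Because $f$ has only finitely many poles and the components of a wandering orbit are pairwise disjoint, the invariance holds for all $n$ past some index $n_0$.

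Next, I would propagate multiple connectivity along the orbit: if $U_m$ has a bounded complementary component $H$ for some $m \ge n_0$, then $f(\partial H)\subset J(f)$ forces $f(H)$ to avoid $U_{m+1}$ and hence to be trapped inside a bounded complementary component of $U_{m+1}$, so $U_{m+1}$ is also multiply connected. Combined with the hypothesis that infinitely many $U_n$ are multiply connected, this yields multiple connectivity of all $U_n$ with $n \ge n_0$, and in particular boundedness of $f^n(U)$. The remaining two Baker properties, namely $\mathrm{dist}(0, f^n(U)) \to +\infty$ and eventual surrounding of the origin, I would obtain by feeding Bohr's theorem (Lemma~\ref{lem:Dom}) into the invariance: a sufficiently large annulus inside $\widetilde{U}_n$ is mapped by $f$ onto a set containing a circle of radius at least $cM(\rho/2, f)$ inside $\widetilde{U}_{n+1}$, and iterating gives unbounded growth together with the surrounding of $0$.

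I expect the main obstacle to be precisely the step establishing $f(\widetilde{U}_n) \subset \widetilde{U}_{n+1}$ in the presence of poles. Without the finite-poles hypothesis, the set of ``bad'' indices $n$, those for which a bounded complementary component of $U_n$ contains a pole, could be infinite, and the inductive machinery producing boundedness, growth and the surrounding property would collapse; it is at this step, and essentially only here, that the assumption of finitely many poles is truly used.
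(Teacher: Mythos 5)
The paper does not prove this lemma at all; it is quoted as a known result of Rippon and Stallard \cite{RS08}, so there is no ``paper's proof'' to compare against and your proposal must stand on its own.

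Your forward direction is fine: since the $U_n$ are pairwise disjoint, the origin can lie in at most one of them, so from the surrounding property of a Baker wandering domain one gets that all but (at most) one $U_n$ is multiply connected.

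In the converse direction, however, there is a genuine gap precisely at the step you flag as the crux. The deduction ``finitely many poles and pairwise disjointness of the $U_n$ $\Rightarrow$ the fills $\widetilde{U}_n$ contain no pole for all $n\geq n_0$'' is not valid. Pairwise disjointness of the open sets $U_n$ says nothing about their fills being disjoint; a single pole $p$ can lie in a bounded complementary component of $U_n$ for infinitely many $n$ (think of infinitely many nested multiply connected domains all surrounding $p$). So the set of ``bad'' indices is not finite for the reason you give, and the filled-invariance machinery does not get off the ground. What the finite-pole hypothesis actually buys is access to the much stronger structural input used elsewhere in the paper, namely Zheng's theorem \cite{Zhe06}, which says that for a transcendental meromorphic function with finitely many poles a multiply connected wandering domain satisfies $f^n(U)\supset A(r_n,s_n)$ with $r_n\to\infty$; this is the tool that lets one control the location of the holes of $U_n$ relative to the poles, and it has to come in \emph{before} the filled-invariance step rather than at the very end via Bohr's theorem as you propose. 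As written, your argument invokes the escape of the orbit only at the final stage to get growth and surrounding of the origin, but it is needed earlier to justify discarding the poles; without it the induction establishing $f(\widetilde{U}_n)\subset\widetilde{U}_{n+1}$ and the propagation of multiple connectivity both rest on an unsupported premise.
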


\begin{proof}[Proof of Theorem \ref{thm:BRS}]
We can assume without loss of generality that $f$ has at least one pole. Since $f$ has a Baker wandering domain, we know that $f\notin\mathcal{P}$ by \cite{Bak87}, so $J(f) = J(g)$ by \cite[~Theorem 1.6]{Tsa19}. Therefore, $U$ is a bounded component of $F(g)$, whence $V = g(U)$ is a Fatou component of $g$ (and thus of $f$) by a result of Herring \cite[~Theorem 2]{Her98}. Suppose now that $V$ is not a Baker wandering domain. Then, by Lemma \ref{lem:RSBaker}, there must be $N\in\mathbb{N}$ such that $f^n(V)$ is simply connected for all $n\geq N$. Now, Zheng \cite{Zhe06} proved that, for all sufficiently large $n$, $f^n(U)$ contains an annulus $A(r_n, s_n) := \{z\in\Cx : r_n < |z| < s_n\}$ such that $r_n\to \infty$ and $(2\pi)^{-1}\log(s_n/r_n) \to +\infty$ as $n\to +\infty$. To complete the argument, we reproduce the relevant parts of the proof of Proposition 3.3 in \cite{BRS16}.

Since $f^n\left(g(U)\right) = f^n(V)$, it follows by the commuting hypothesis that $g\left(f^n(U)\right) = f^n(V)$ as well. Now, for sufficiently large $n$, we can take $r\in (r_n, s_n)$ such that $\gamma = g\left(\{z\in\Cx : |z| = r\}\right)$ winds at least once around the origin (by Picard's theorem combined with the argument principle), contains points of modulus $M(r, g) > r$, and lies in $f^n(V)$ (because $\{z\in\Cx : |z| = r\}\subset A(r_n, s_n)\subset f^n(U)$). Since we assumed $f^n(V)$ to be simply connected, it must meet $A(r_n, s_n)\subset f^n(U)$, and hence $U_n$; this is a contradiction, as $U_n$ is a Baker wandering domain and thus multiply connected.
\end{proof}

\section{Existence of ping-pong orbits} \label{sec:PingPong}

In this section, we show the existence of ping-pong orbits. First, in Subsection \ref{ssec:ex}, we construct examples of ping-pong wandering domains.

\subsection{Constructing wandering domains with a ping-pong orbit} \label{ssec:ex}
We shall modify a method used by Martí-Pete in the proof of \cite[~Theorem 1.1]{Mar19}. We choose $f$ to be of the form $f(z) = g(z) + 1/z$ with $g$ entire, and focus on obtaining $g$ through appropriate approximations. We will construct two families of discs, $A_m$ and $B_m$, $m\in\mathbb{N}$, which will give rise (respectively) to the escaping part of a ping-pong orbit for a wandering domain $U$ and pre-images of an attracting domain $V$.

Let $R > 0$, and $k_m$ be a sequence of positive real numbers such that, for all $m\in\mathbb{N}$, $k_m > 5/2$ and $k_{m+1} > k_m + 3R$. Let
\[ \text{$A_m = B(k_m, R)$ and $B_m = B\left(\frac{k_{m+1} + k_m}{2}, \frac{R}{4}\right)$ for all $m\in\mathbb{N}$,} \]
$B_+ = \overline{B(2, 1/4)}$ and $B_- = \overline{B(-5, 1)}$; here and throughout, $B(z, r)$ denotes the open disc of centre $z$ and radius $r > 0$. Notice that, for all $m\in\mathbb{N}$, the sets $1/A_m$ and $1/B_m$ are also discs; they are contained in $\mathbb{D}$ and converge to $0$ as $m\to +\infty$. Now, define
\[ F := \overline{\mathbb{D}}\cup B_-\cup B_+\cup \bigcup_{m\in\mathbb{N}} (\overline{A_m}\cup \overline{B_m}). \]
Since $F$ is a countable union of hand-picked disjoint compact sets, it satisfies the hypotheses of Nersesyan's theorem (see \cite[~Chapter IV]{Gai87}): $\Chat\setminus F$ is connected and locally connected at infinity, and all components of the interior of $F$ are bounded. It follows that there exists an entire function $g$ satisfying the following inequalities.
\begin{equation} \label{eq:DavidIneq}
 \begin{cases}
        |g(z) + 1/z - 1/k_{m+1}| < \epsilon_m, & z\in A_m\\
        |g(z) + 1/z - 2| < 1/5, & z\in B_+\cup\left(\bigcup_{m\in\mathbb{N}} B_m\right)\\
        |g(z)| < R/4, & z\in\mathbb{D}\\
        |g(z) + 1/z - (z + 5)| < 1/2, & z\in B_-
    \end{cases},
\end{equation}
where $\epsilon_m > 0$ are small enough so that $B(1/k_{m+1}, \epsilon_m)\subset 1/A_{m+1}$ and $1/B(1/k_{m+1}, \epsilon_m)\subset B(k_{m+1}, R/4)$ for all $m\in\mathbb{N}$.

Defining $f(z) = g(z) + 1/z$, we see immediately that $f$ is meromorphic with a single pole at the origin. From the last inequality, $f$ approximates the translation $z\mapsto z + 5$ in the disc $B_-$; by Rouché's theorem, it follows that $f$ has a zero in $B_-$, and thus $f$ is not in class $\mathcal{P}$. Furthermore, the first three inequalities imply that, for all $m\in\mathbb{N}$,
\begin{equation} \label{eq:inclusions}
 \begin{cases}
        f(A_m) \subset 1/A_{m+1},\\
        f^2(A_m) \subset A_{m+1},\\
        f(B_+)\subset B_+,\\
        f(B_m)\subset B_+
   \end{cases}
\end{equation}
By Montel's theorem, $B_+$ is contained in an invariant domain $V$ of $F(f)$, and each $B_m$ belongs to a pre-image of $V$. Likewise, each $A_m$ is contained in a Fatou component $U_{2m}$ with $U_{m+1}$ containing $f(A_m)\subset 1/A_{m+1}$, and upon proving that the $U_m$ are disjoint we can conclude that $U_m$ is in fact a ping-pong orbit of wandering domains. To be precise, we can only prove that all but finitely many of them are distinct, but that is enough for our conclusion to hold.

To this end, we claim that if $U_m = U_n$ for some $m\neq n$, then $U_m$ is multiply connected. Indeed, since $F$ and the inequalities (\ref{eq:DavidIneq}) are all symmetric with respect to $\mathbb{R}$, it follows from a symmetric version of Nersesyan's theorem -- see \cite[~Section 2]{Gau13} -- that $g$, and thus $f$, can be taken to be symmetric w.r.t. $\mathbb{R}$, and so any component of $F(f)$ is also symmetric with respect to the real line. Since $U_m$ must contain both $k_m$ and $k_n$ but avoid any $B_l$ between them, it follows that $U_m$ must be multiply connected. If this happens infinitely often, then $U_1\supset A_1$ is a Baker wandering domain by Lemma \ref{lem:RSBaker}, which means that $U_n$ escapes to infinity as $n\to +\infty$. However, we know that $U_n$ returns to the unit disc every second iterate, contradicting the fact that it is a Baker wandering domain. This concludes our construction.

In our construction, we took great care to guarantee that $f$ was not in class $\mathcal{P}$; with a few modifications, however, the same method can be used to obtain an example in this class.

First, one modifies the sets $A_m$ and $B_m$: instead of two families of circles, we require $A_m$ to be a family of symmetric (w.r.t. $\mathbb{R}$) annuli sectors instead. Notice that this means that $\log A_m$ is a rectangle (which can be chosen to be a square) centred at a point $a_m$. Next, we need $f$ to be of the form $f(z) = \exp\left(g(z)\right)/z$; in order to write the inequalities concerning $g$, we consider $\log f(z) = g(z) - \log z$. They take the form:
\[ \begin{cases}
    |g(z) - \log z - a_m| < K_1, & z\in A_m \\
    |g(z) - \log z - \log 2| < K_2, & z\in B_+\cup\left(\bigcup_{m\in\mathbb{N}} B_m\right) \\
    |g(z)| < K_3, & z\in \mathbb{D}
   \end{cases}, \]
and the positive constants $K_1$, $K_2$ and $K_3$ are chosen so that the inclusions (\ref{eq:inclusions}) hold as before. From here, the same arguments as above can be used to show that $f(z) = \exp\left(g(z)\right)/z$ has a ping-pong wandering domain.

\subsection{Functions in class $\mathcal{P}$} \label{ssec:P}
We now prove Theorem \ref{thm:pingpong} in the case when $f\in\mathcal{P}$. In this case, we appeal to a result of Martí-Pete \cite{Mar18} on the escaping sets of transcendental self-maps of $\Cx^*$, by which we mean a holomorphic map $f:\Cx^*\to\Cx^*$ such that both $0$ and $\infty$ are essential singularities. In order to state it, we shall need some definitions.

We call any sequence $(e_n)_{n\in\mathbb{N}}\in\{0, \infty\}^\mathbb{N}$ an essential itinerary. Given a holomorphic function $f:\Cx^*\to\Cx^*$ and any $r > 0$, its maximum and minimum moduli are denoted, respectively, by
\[ \text{$M(r, f) = \max_{|z| = r} |f(z)|$ and $m(r, f) = \min_{|z| = r} |f(z)|$}. \]
\begin{definition}
Let $f$ be a transcendental self-map of $\Cx^*$ and $e = (e_n)_{n\in\mathbb{N}}$ an essential itinerary. Let $R > 0$ and define $R_1 = R$ if $e_1 = \infty$ or $R_1 = 1/R$ if $e_1 = 0$. For $n > 1$, set
\[
    R_n = \begin{cases} M(R_{n-1}, f) & \text{if $e_n = \infty$} \\
                        m(R_{n-1}, f) & \text{if $e_n = 0$} \end{cases}, \]
and assume that $R$ was chosen large enough that the sequence $R_n$ accumulates at $\{0, \infty\}$. For $l\in\mathbb{Z}$, we define $A_e^{-l}(R, f)$ to be the set of $z\in\Cx$ such that
\[ \text{$|f^{n+l}(z)| \geq R_n$ if $e_n = \infty$ and $|f^{n+l}(z)| \leq R_n$ if $e_n = 0$} \]
for all $n\in\mathbb{N}$ satisfying $n + l\in\mathbb{N}$. Finally, the \textit{fast escaping set with respect to the essential itinerary} $e$ is
\[ A_e(f) = \bigcup_{l\in\mathbb{Z}} \bigcup_{k\in\mathbb{N}} A_{\sigma^k(e)}^{-l}(R, f), \]
where $\sigma$ denotes the Bernoulli shift map.
\end{definition}

As suggested by the notation, $A_e(f)$ is independent of our choice of $R$ as long as it satisfies certain conditions specified in \cite{Mar18}, and which shall not be our concern. We are now ready to state the needed result.

\begin{lemma} \label{lem:david}
Let $f$ be a transcendental self-map of $\Cx^*$. Then, for any essential itinerary $e\in\{0, \infty\}^\mathbb{N}$, $A_e(f)\cap J(f) \neq \emptyset$ and $J(f) = \partial A_e(F)$.
\end{lemma}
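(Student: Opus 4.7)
My plan is to follow the blueprint of Rippon and Stallard's analysis of the fast escaping set $A(f)$ for meromorphic functions \cite{RS05, RS12}, adapted to the two-essential-singularity setting of $\Cx^*$-self-maps. The proof splits into showing $A_e(f)\cap J(f)\neq\emptyset$ via a nested compact-set construction, and then deriving the boundary identity $J(f)=\partial A_e(f)$ from normality considerations.

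For the non-emptiness, fix an essential itinerary $e$ and the scale sequence $R_n$ from the definition. Analogously to Rippon and Stallard's outer sequences, I would build inductively a decreasing family of non-empty compact sets $K_N\subset\Cx^*$, where $K_N$ consists of points $z$ such that $f^n(z)$ respects the itinerary condition $|f^n(z)|\geq R_n$ if $e_n=\infty$, and $|f^n(z)|\leq R_n$ if $e_n=0$, for every $1\leq n\leq N$, and $z$ additionally lies in a prescribed compact region. The crucial induction step is showing that $K_{N+1}$ is non-empty given $K_N$: since both $0$ and $\infty$ are essential singularities, Bohr-type covering results (Lemma \ref{lem:Dom} near $\infty$, together with a symmetric version near $0$ obtained via $z\mapsto 1/z$) guarantee that the image of a suitable annulus under $f$ covers circles of radius comparable to $M(\cdot,f)$ and, by Casorati--Weierstrass, also reaches arbitrarily small moduli. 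This supplies the required pre-image inside $K_N$, and $\bigcap_N K_N$ is non-empty by compactness and contained in $A_e^0(R,f)\subset A_e(f)$.

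To ensure that such a point lies in $J(f)$ rather than in a Fatou component, the sets $K_N$ are positioned on the ``boundary'' between points satisfying the itinerary and those failing it (pre-images of the circles $\{|z|=R_n\}$). A Montel argument then forces these boundary points into $J(f)$: on any open neighbourhood $V$ where every iterate $f^n$ respects the itinerary, $f^n(V)$ omits a growing ball around $0$ when $e_n=\infty$ and around $\infty$ when $e_n=0$, so for $N_0$ large enough the family $(f^n)_{n\geq N_0}$ omits three values of $\Chat$ (namely $0$, $\infty$ and, say, $1$) and is hence normal on $V$, placing $V$ in $F(f)$.

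For the boundary identity, $\partial A_e(f)\subset J(f)$ follows because on any Fatou component $U$ on which $(f^{n_k})$ converges locally uniformly, all points share the same asymptotic behaviour along $(n_k)$; thus $U\cap A_e(f)\in\{U,\emptyset\}$, and no point of $U$ can be a boundary point of $A_e(f)$. Conversely, $A_e(f)$ is backward invariant: if $w\in A_{\sigma^k(e)}^{-l}(R,f)$ and $f^m(z)=w$, then $|f^{n+l+m}(z)|=|f^{n+l}(w)|$ satisfies the same constraints, so $z\in A_{\sigma^k(e)}^{-(l+m)}(R,f)\subset A_e(f)$. Since Proposition \ref{prop:blowup} ensures that pre-images of any non-exceptional point of $A_e(f)$ are dense in $J(f)$, we get $J(f)\subset\overline{A_e(f)}$, and combined with $\mathrm{int}(A_e(f))\cap J(f)=\emptyset$ (the same Montel argument as above) this yields $J(f)\subset\partial A_e(f)$. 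The main obstacle is the non-emptiness step, specifically the geometric construction of the $K_N$ near both essential singularities and the accompanying normality argument placing the resulting point on $J(f)$; once these are in hand, backward invariance and standard Fatou--Julia machinery deliver the boundary identity almost for free.
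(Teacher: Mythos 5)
First, note that the paper does not prove this lemma at all: it is quoted verbatim as a result of Mart\'i-Pete \cite{Mar18}, so there is no in-paper argument to compare against. Your blueprint (nested compact sets built from covering lemmas near both essential singularities, then backward invariance plus the blowing-up property for the boundary identity) is essentially the strategy Mart\'i-Pete uses, itself modelled on Rippon and Stallard's treatment of $A(f)$, so the overall architecture is sound.

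There is, however, a genuine gap in the step that places a point of $A_e(f)$ in $J(f)$. Your Montel argument proves the \emph{wrong direction}: it shows that any open set on which every iterate respects the itinerary lies in $F(f)$. That is useful for showing $J(f)\cap\mathrm{int}\,A_e(f)=\emptyset$, but it cannot force your constructed point into $J(f)$ --- indeed points of $A_e(f)$ can perfectly well lie in the Fatou set (fast escaping wandering domains are the central objects of this paper), and ``being on the boundary of the level sets $\{|z|=R_n\}$'' does not prevent this. The two standard repairs are: (a) arrange that each compact set $K_N$ in the construction meets $J(f)$ and invoke the refined nested-sets lemma (Lemma \ref{lem:RSslow}, which lets one choose the limiting point in $J(f)$), using that $J(f)$ accumulates at both $0$ and $\infty$ so that large annuli near either singularity meet it; or (b) show that if the constructed point lies in a Fatou component $U$, then $\overline{U}\subset A_e(f)$, whence any point of $\partial U\subset J(f)$ lies in $A_e(f)\cap J(f)$. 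Option (b) feeds into the second soft spot of your sketch: the dichotomy $U\cap A_e(f)\in\{U,\emptyset\}$ is not a consequence of ``shared asymptotic behaviour'' in any automatic sense (escaping components admit no locally uniformly convergent subsequence with finite limit); it requires the quantitative distortion estimate that $|f^n(z)|$ and $|f^n(w)|$ are comparable for $z,w$ in a compact subset of $U$, combined with growth lemmas for $M(r,f)$ and $m(r,f)$ showing that this comparability can be absorbed into a shift of the index $l$. Finally, a minor point: Proposition \ref{prop:blowup} as stated applies to meromorphic maps of $\Cx$, not to self-maps of $\Cx^*$; you need the punctured-plane analogue (with exceptional set contained in $\{0,\infty\}$ together with at most one further point), which is standard but should be cited or proved separately.
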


We are ready to prove Theorem \ref{thm:pingpong} for functions in class $\mathcal{P}$; we start with a function $f$ in this class, and assume (conjugating $f$ by a translation if necessary) that its only pole is at the origin. Then, $f^2$ is a transcendental self-map of $\Cx^*$, and we apply Lemma \ref{lem:david} with the essential itinerary $e = (e_n)_{n\in\mathbb{N}}$ chosen as $e_n = 0$ if $n$ is even and $e_n = \infty$ if $n$ is odd. Then, if $z_0\in A_e(f)$ and $l$ is as required in the definition of $A_e(f)$, it follows that there exists $z$ in the orbit of $z_0$ such that
\[ \text{$(f^2)^{2n + l}(z)\to 0$ and $(f^2)^{2n + 1 + l}(z)\to\infty$ as $n\to+\infty$.} \]
It is clear, then, that $z$ has a ping-pong $f$-orbit for the subsequences $m_k = 2(2k + l)$ and $n_k = 2(2k + 1 + l)$. The density of such points in $J(f)$ follows from the fact that $J(f^2) = \partial A_e(f^2)$, by Lemma \ref{lem:david}.

\subsection{Functions not in class $\mathcal{P}$} \label{ssec:notP}
Finally, we prove Theorem \ref{thm:pingpong} when $f\notin\mathcal{P}$. For this case, we shall rely on the following version of the Ahlfors Five Islands Theorem \cite[~Lemma 5]{Ber93}.
\begin{lemma} \label{lem:AFIT}
Let $f$ be a transcendental meromorphic function with finitely many poles, and $D_1$, $D_2$ and $D_3$ be three simply connected domains in $\Cx$ with disjoint closures. Then, there exists $j\in\{1, 2, 3\}$ and, for any $R > 0$, a simply connected domain $G\subset \{z\in\Cx : |z| > R\}$ such that $f$ maps $G$ conformally onto $D_j$.
\end{lemma}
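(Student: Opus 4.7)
The plan is to derive this three-islands result from the classical Ahlfors Five Islands Theorem by constructing a non-normal family near the essential singularity of $f$ at $\infty$, then trimming the count of required domains via Picard/Nevanlinna-type arguments.

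First, I would rescale near infinity. Pick a sequence $(w_n)\subset\Cx$ with $|w_n|\to\infty$, and on the fixed annulus $A = \{z\in\Cx : 1/2 < |z| < 2\}$ set $f_n(z) = f(w_n z)$. Since $f$ has only finitely many poles, each $f_n$ is meromorphic on $A$ for all sufficiently large $n$. I would then verify that $(f_n)$ is not normal on $A$: if some subsequence converged spherically to a meromorphic limit $g$ (allowing $g\equiv\infty$), then a Hurwitz-type argument using any value $a\in\Cx$ that is not Picard-exceptional for $f$ near $\infty$ would give a contradiction, since $f^{-1}(a)$ meets the circle $\{|z|=|w_{n_k}|\}$ (up to a bounded factor) for infinitely many $k$, whereas $g^{-1}(a)$ is discrete in $A$.

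Second, I would invoke the classical Ahlfors Five Islands Theorem for non-normal families: given five Jordan domains with pairwise disjoint closures on $\Chat$, some member of a non-normal family of meromorphic functions admits a simply connected island mapped conformally onto one of them, and in fact this happens for infinitely many members over one fixed domain $D_j$. Applied to $(f_n)$, this produces, for any prescribed $R>0$, an index $n$ with $|w_n|>2R$ together with a simply connected $\widetilde{G}\subset A$ such that $f_n|_{\widetilde{G}}$ is a conformal homeomorphism onto the chosen domain. Pulling back by the dilation $z\mapsto w_n z$, the set $G=w_n\widetilde{G}$ lies in $\{|z|>|w_n|/2\}\subset\{|z|>R\}$ and is mapped conformally by $f$ onto the chosen domain.

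Third, I would reduce the required count from five to three. Given only $D_1,D_2,D_3$, I would augment them with two small auxiliary Jordan domains $E_1,E_2$ centred at two points $a_1,a_2\in\Chat$ that are not Picard-exceptional for $f$ and whose closures are pairwise disjoint from each other and from the $D_j$. The five-islands conclusion applied to $\{D_1,D_2,D_3,E_1,E_2\}$ provides infinitely many islands, over one fixed domain, at rescalings tending to $\infty$. If that domain were $E_i$, the presence of arbitrarily many unramified nearly complete coverings of $E_i$ on thin annuli near $\infty$ would contradict $a_i$ not being totally exceptional, via Nevanlinna's second fundamental theorem applied to $\{a_1,a_2,\infty\}$. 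Hence one of $D_1,D_2,D_3$ collects the islands, and the required $j$ exists.

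The main obstacle is making this final five-to-three reduction genuinely rigorous: it depends on quantitative Ahlfors theory of covering surfaces together with defect relations, and the cleanest route is to appeal directly to \cite{Ber93}, where this reduction is packaged for transcendental meromorphic functions in precisely the form needed. The preceding rescaling and non-normality step, which constitutes the adaptation of that result to our neighbourhood-of-infinity setting, is the part I would carry out in detail.
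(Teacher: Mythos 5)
This lemma is not proved in the paper at all: it is quoted verbatim from Bergweiler's survey, and \cite[Lemma 5]{Ber93} already contains both the localisation to $\{z : |z|>R\}$ and the statement that ``five'' may be replaced by ``three'' when $f$ has finitely many poles. So the fallback you offer at the end -- ``appeal directly to \cite{Ber93}'' -- is in fact the paper's entire proof, and your rescaling and five-to-three steps are an attempt to reprove the cited result rather than to adapt it.

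The attempted reduction from five domains to three is where the genuine gap lies. The existence of arbitrarily many simple (unramified) islands over a small domain $E_i$ around a non-exceptional value $a_i$ contradicts nothing: a non-exceptional value is precisely one whose neighbourhoods are covered often, and the second fundamental theorem applied to $\{a_1,a_2,\infty\}$ gives a \emph{lower} bound on $N(r,a_i)$, not an upper bound on the number of simple islands over $E_i$. Your argument therefore has no mechanism to stop all the islands supplied by the five islands theorem from lying over $E_1$ and $E_2$, and the reduction collapses. The true reduction operates inside Ahlfors' second fundamental theorem for covering surfaces: since $f$ is holomorphic on $\{z : |z|>R\}$ for large $R$ (equivalently, $\infty$ has Ahlfors--Nevanlinna deficiency $1$), a base domain containing $\infty$ is covered negligibly, which effectively lowers the count of required domains by two; this is exactly what is packaged in \cite[Lemma 5]{Ber93}. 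A secondary issue: your non-normality argument for $f_n(z)=f(w_nz)$ fails for an arbitrary prescribed sequence $(w_n)$, because $f^{-1}(a)$ need not meet the annuli $\{|w_n|/2<|z|<2|w_n|\}$. One should instead consider the full family $\{z \mapsto f(wz):|w|>R'\}$ on the fixed annulus; if it were normal, Marty's theorem would give $f^{\#}(\zeta)=O(1/|\zeta|)$, hence $T(r,f)=O\left((\log r)^2\right)$ via the Ahlfors--Shimizu characteristic, contradicting the transcendence of $f$.
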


We shall also need the following lemma -- see Rippon and Stallard \cite[~Lemma 1]{RS11} for this particular statement and proof.

\begin{lemma} \label{lem:RSslow}
Let $E_n$, $n\in\mathbb{N}$, be a sequence of compact subsets of $\Cx$ and $f:\Cx\to\Chat$ be a continuous function such that
\[ \text{$f(E_n)\supset E_{n+1}$ for all $n\in\mathbb{N}$.} \]
Then, there exists $z\in\Cx$ such that $f^n(z)\in E_n$ for all $n\in\mathbb{N}$. If, in addition, $f$ is meromorphic and $E_n\cap J(f)\neq \emptyset$ for $n\in\mathbb{N}$, then $z$ can be chosen to lie in $J(f)$.
\end{lemma}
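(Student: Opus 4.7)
The plan is a backward pullback construction combined with a compactness and diagonal argument, with extra care at one point because $f$ takes values in $\Chat$ rather than $\Cx$.

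For each $N \geq 1$, I claim the set
\[ K_N := \{w \in E_0 : f^n(w) \in E_n \text{ for } n = 1, \ldots, N\} \]
is non-empty (if the paper's $\mathbb{N}$ starts at $1$ rather than $0$, relabel $E_n \to E_{n-1}$ throughout; the proof is the same). Indeed, pick any $y_N \in E_N$; the hypothesis $f(E_{N-1}) \supset E_N$ produces $y_{N-1} \in E_{N-1}$ with $f(y_{N-1}) = y_N$, and backward induction yields $y_0 \in E_0$ with $f^n(y_0) = y_n \in E_n$ for $n = 0, 1, \ldots, N$, so $y_0 \in K_N$.

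Choose $z_N \in K_N$. Since $E_0$ is compact and, for every fixed $n$, the iterates $f^n(z_N)$ lie in the compact set $E_n \subset \Cx$ for $N \geq n$, a diagonal extraction yields a subsequence $(z_{N_j})$ such that $f^n(z_{N_j}) \to w_n \in E_n$ for every $n \geq 0$; set $z := w_0$. I then argue by induction that $f^n(z) = w_n$: the case $n = 0$ is trivial, and since $f \colon \Cx \to \Chat$ is continuous at $w_n$,
\[ f(w_n) = \lim_{j\to\infty} f(f^n(z_{N_j})) = \lim_{j\to\infty} f^{n+1}(z_{N_j}) = w_{n+1}, \]
so $f^{n+1}(z) = w_{n+1} \in E_{n+1}$, as required. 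For the refinement to $J(f)$, choose $y_N \in E_N \cap J(f)$ (non-empty by hypothesis) at the start of the backward construction; by complete invariance of $J(f)$ under the meromorphic map $f$, every pullback $y_{n-1}$ automatically lies in $J(f)$, so $z_N \in E_0 \cap J(f)$, and closedness of $J(f)$ gives $z \in J(f)$.

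The only subtle point is the tension between continuity and the $\Chat$-valued nature of $f$: a priori a cluster point $w_n$ could be a pole of $f$, so that $f(w_n) = \infty$ and the identity $f(w_n) = w_{n+1}$ fails to sit inside $\Cx$, breaking the induction. This is ruled out automatically, because the continuity computation above forces $f(w_n) = w_{n+1}$ and the latter lies in $E_{n+1} \subset \Cx$; hence $w_n$ cannot be a pole. Pinning down this observation is the one non-mechanical ingredient in an otherwise routine compactness argument.
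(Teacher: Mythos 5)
Your proof is correct. Note that the paper does not actually prove this lemma; it is quoted from Rippon and Stallard \cite[Lemma~1]{RS11}, and your argument is essentially the standard one used there: a backward pullback to show the finite-stage sets $K_N$ are non-empty, followed by a compactness argument, with backward invariance of $J(f)$ giving the refinement. The only stylistic difference is that the usual write-up observes that the sets $K_N$ are \emph{closed} (hence compact and nested), so that $\bigcap_N K_N \neq \emptyset$ by the finite intersection property and any point of the intersection works immediately; your diagonal extraction reaches the same point but then has to re-verify that the limit tracks the $E_n$, which is exactly the continuity-at-$w_n$ step you flag. Your handling of the one genuine subtlety -- that a cluster point $w_n$ cannot be a pole because $f(w_n)=w_{n+1}\in E_{n+1}\subset\Cx$ -- is correct, and the same observation is what makes the $K_N$ closed in the nested-sets phrasing.
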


We are ready to start the proof of Theorem \ref{thm:pingpong}; let $f$ be a transcendental meromorphic function with finitely many poles and not in class $\mathcal{P}$, and pick $p\in\Cx$ a pole of $f$ which is not an omitted value. Using Lemma \ref{lem:AFIT}, we shall construct an appropriate sequence of compact subsets $E_n$ and then apply Lemma \ref{lem:RSslow} to obtain a ping-pong orbit around $p$; we refer to Figure \ref{fig:pingpong} for a representation of the process.

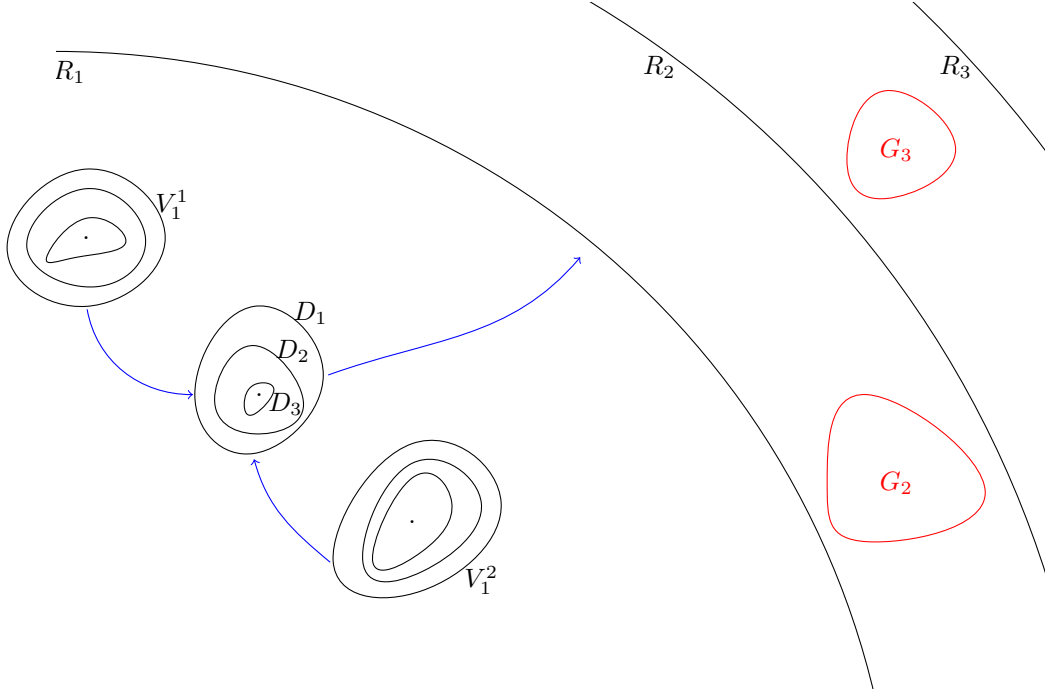
\begin{figure}[!h]
    \centering
    \begin{tikzpicture}[scale=1.3] 

        \begin{scope}
        \clip(0,-2) rectangle ++(10,7);

        \draw (0,-4) circle [radius=8.5];
        \node[anchor=north] at (0.13,4.5) {$R_1$};

        \draw (0,-4) circle [radius=10.5];
        \node[anchor=north] at (6.1,4.55) {$R_2$};

        \draw (0,-4) circle [radius=12.5];
        \node[anchor=north] at (9.1,4.55) {$R_3$};
        \end{scope}

        \draw plot[smooth cycle,tension=0.9] coordinates {(2,0.8) (2.2,1.05) (2,1.1) (1.9,0.9)};
        \node at (2.05,1) {$.$};
        \node[anchor=west] at (2.05,0.9) {$D_3$};

        \draw plot[smooth cycle,tension=0.9] coordinates {(2,0.6) (2.5, 0.9) (2, 1.5) (1.6,0.95)};
        \node at (2.39,1.45) {$D_2$};

        \draw plot[smooth cycle,tension=0.9] coordinates {(2,0.4) (2.7,1.2) (2,1.9) (1.4,1)};
        \node at (2.57,1.83) {$D_1$};


        \draw plot[smooth cycle,tension=0.9] coordinates {(3.7,-1) (4.5,-0.1) (3.6,0.5) (2.8,-0.7)};
        \node at (3.6,-0.3) {$.$};
        \node[anchor=north] at (4.3,-0.65) {$V_1^2$};

        \draw plot[smooth cycle,tension=0.9] coordinates {(3.71,-0.8) (4.3,-0.11) (3.6,0.3) (3.1,-0.7)};

        \draw plot[smooth cycle,tension=0.9] coordinates {(3.7,-0.6) (4,-0.1) (3.6,0.15) (3.2,-0.7)};


        \draw plot[smooth cycle,tension=0.9] coordinates {(0.3,1.9) (1.1,2.6) (0.3,3.3) (-0.5,2.5)};
        \node at (0.3,2.6) {$.$};
        \node[anchor=south] at (1.17,2.7) {$V_1^1$};

        \draw plot[smooth cycle,tension=0.9] coordinates {(0.4,2.1) (0.9,2.6) (0.3,3.1) (-0.3,2.5)};

        \draw plot[smooth cycle,tension=0.9] coordinates {(0.2,2.4) (0.7,2.55) (0.31,2.8) (-0.1,2.4)};


        \draw[->,blue] (2.75,1.2) to [out=20,in=230] (5.3,2.4);

        \draw[->,blue] (0.31,1.87) to [out=280,in=180] (1.38,1);

        \draw[->,blue] (2.77,-0.71) to [out=140,in=285] (2,0.34);


        \draw[red] plot[smooth cycle,tension=0.9] coordinates {(8.2,-0.5) (9.4,0) (8.2,1) (7.8,0.1)};
        \node[red] at (8.5,0.1) {$G_2$};

        \draw[red] plot[smooth cycle,tension=0.9] coordinates {(8.4,3) (9.1,3.5) (8.4,4.1) (8,3.4)};
        \node[red] at (8.5,3.5) {$G_3$};

        \end{tikzpicture}
    \caption{A schematic of the sets involved in the construction of a ping-pong orbit. In red, we highlight the domains $G_n$ obtained through Ahlfors's Five Islands Theorem. The blue arrows denote mapping under $f$.}
    \label{fig:pingpong}
\end{figure}

First, pick any positive, monotone sequence $R_n\to + \infty$; we say any sequence, but we reserve the right to ask that $R_1$ be large enough to satisfy certain constraints to be specified further ahead. Then, take a simply connected neighbourhood $D_1$ of $p$ such that $f:D_1\to\{z\in\Cx : |z| > R_1\}$ is a covering map, possibly branched at $p$ (in order to guarantee that $D_1$ can satisfy such conditions, we have already asked that $R_1$ be large). Since $p$ is not omitted, we can choose bounded, simply connected pre-images $V_1^1$ and $V_2^2$ of $D_1$ so that $f:V_1^i\to D_1$ are also possibly branched covering maps\footnote{For convenience, all covering maps henceforth are taken to be possibly branched.}. As preparation to use Lemma \ref{lem:AFIT}, we require that $R_1$ is large enough that $D_1$, $V_1^1$ and $V_1^2$ have disjoint closures.

Next, we pick $D_2\subset D_1$ so that $f:D_2\to\{z\in\Cx : |z| > R_2\}$ is a covering map, and pre-images $V_2^i\subset V_1^i$ of $D_2$, $i=1, 2$, following the same ``recipe'' as before. Now, we use Lemma \ref{lem:AFIT} to find a simply connected $G_2\subset \{z\in\Cx : |z| > R_1\}$ such that $f$ maps $G_2$ conformally onto one of $D_2$, $V_2^1$ or $V_2^2$.

After coming back to $D_2$, the next step is to choose $D_3\subset D_2$ so that $f:D_3\to\{z\in\Cx : |z| > R_3\}$ is a covering map, and pre-images $V_3^i\subset V_2^i$ as before. Again by Lemma \ref{lem:AFIT}, we find a domain $G_3\subset \{z\in\Cx : |z| > R_2\}$ such that $f$ maps $G_3$ conformally onto one of $D_3$, $V_3^1$ or $V_3^2$.

Proceeding inductively, we produce sequences of domains $D_n$, $G_n$ such that, for all $n$,
\begin{enumerate}[(i)]
    \item $D_n$ are progressively smaller neighbourhoods of $p$;
    \item $G_{n+1}\subset \{z\in\Cx : |z| > R_n\}$;
    \item $f(D_n)\supset G_{n+1}$;
    \item either $f$ maps $G_n$ conformally onto $D_n$, or $f$ maps $G_n$ conformally onto a covering space of $D_n$ (i.e., $V_n^i$ for some $i\in\{1, 2\}$).
\end{enumerate}

Therefore, we can construct a sequence of compact sets $E_n$ satisfying the hypotheses of Lemma \ref{lem:RSslow} by alternating between $\overline{D_n}$ and $\overline{G_n}$, going through $\overline{V_n^i}$ between $\overline{G_n}$ and $\overline{D_n}$ as necessary. By Lemma \ref{lem:RSslow}, there exists $z_0\in\Cx$ such that $f^n(z_0)\in E_n$ for all $n\in\mathbb{N}$, and so by our choice of $E_n$ the point $z_0$ has a ping-pong orbit. Furthermore, since $p\in J(f)$, each $D_n$ intersects the Julia set, and by the complete invariance of $J(f)$ this means that every $G_n$ and $V_n^i$ do too. Thus, by Lemma \ref{lem:RSslow}, $z_0$ can be chosen to be in the Julia set.

Finally, to prove that points with ping-pong orbits are dense in $J(f)$, notice that if $z$ is a point with a ping-pong orbit then any $f^n(z)$ also has a ping-pong orbit, and so does any point in $f^{-n}(z)$. Therefore, the set $BU_P(f)$ of points with a ping-pong orbit is completely $f$-invariant, and thus $\overline{BU_P(f)\cap J(f)}$ is a closed, completely invariant set with more than three points -- since it contains $z_0$ and its entire grand orbit. By the minimality of the Julia set (which follows from Montel's theorem; see \cite[~p. 67]{Bea91} for the case of rational functions), $J(f)\subset \overline{BU_P(f)\cap J(f)}$ so $BU_P(f)$ is dense in $J(f)$.

\end{document}